\def\Z{\mathbb{Z}}
\def\Q{\mathbb{Q}}
\def\R{\mathbb{R}}
\def\H{\mathbb{H}}
\def\N{\mathbb{N}}
\def\C{\mathbb{C}}
\def\D{\mathbb{D}}
\def\sQ{\mathcal{Q}}
\DeclareMathOperator{\im}{Im}
\DeclareMathOperator{\re}{Re}
\DeclareMathOperator{\spt}{spt}
\DeclareMathOperator{\spn}{span}
\def\Res{\operatorname*{Res}}
\def\Spin{\operatorname*{Spin}}
\DeclareMathOperator{\Det}{Det}
\DeclareMathOperator{\Tr}{Tr}
\DeclareMathOperator{\Mp}{Mp}
\DeclareMathOperator{\sgn}{sgn}
\def\SL{{\rm SL}}
\def\GL{{\rm GL}}
\newcommand{\pfrac}[2]{\left(\frac{#1}{#2}\right)}
\newcommand{\ptfrac}[2]{\left(\tfrac{#1}{#2}\right)}
\newcommand{\pMatrix}[4]{\left(\begin{matrix}#1 & #2 \\ #3 & #4\end{matrix}\right)}
\renewcommand{\pmatrix}[4]{\left(\begin{smallmatrix}#1 & #2 \\ #3 & #4\end{smallmatrix}\right)}
\renewcommand{\bar}[1]{\overline{#1}}
\newcommand{\leg}[2] {\left(\frac{#1}{#2}\right)}
\newcommand{\tleg}[2] {\left(\tfrac{#1}{#2}\right)}
\renewcommand{\(}{\left(}
\renewcommand{\)}{\right)}
\renewcommand{\sl}{\big|}
\def\ep{\epsilon}
\newtheorem{theorem}{Theorem}
\newtheorem{lemma}[theorem]{Lemma}
\newtheorem{proposition}[theorem]{Proposition}
\theoremstyle{remark}
\newtheorem*{remark}{Remark}
\newtheorem*{example}{Example}
\numberwithin{equation}{section}
\begin{document}

\title[Formulas for the smallest parts function]{Algebraic and transcendental formulas for  the smallest parts function}

\date{\today}

\author{Scott Ahlgren}
\address{Department of Mathematics\\
University of Illinois\\
Urbana, IL 61801} 
\email{sahlgren@illinois.edu} 

\author{Nickolas Andersen}
\address{Department of Mathematics\\
University of Illinois\\
Urbana, IL 61801} 
\email{nandrsn4@illinois.edu}
\thanks{The first  author was supported by a grant from the Simons Foundation (\#208525 to Scott Ahlgren).}
\begin{abstract}  Building on work of Hardy and Ramanujan, Rademacher proved a well-known  formula 
for the values of the ordinary partition function $p(n)$.
More recently, Bruinier and Ono obtained an algebraic formula for these values.  Here we study the smallest parts function introduced by Andrews;
$\spt(n)$  counts the number of smallest parts in the partitions of $n$.  The generating function for $\spt(n)$ forms a component of a natural mock modular 
form of weight $3/2$ whose shadow is the Dedekind eta function.
Using automorphic methods (in particular the theta lift of Bruinier and Funke),
we obtain an exact formula and an algebraic formula for its values.
In contrast with the case of $p(n)$, the convergence of our expression is non-trivial, and requires power savings estimates 
for weighted sums of Kloosterman sums for a multiplier in weight $1/2$.  These are proved with spectral methods (following an argument of Goldfeld and Sarnak).
\end{abstract}

\maketitle

\allowdisplaybreaks

\section{Introduction}

Let $p(n)$ denote the ordinary partition function.  
Hardy and Ramanujan \cite{HR:asymptotic} developed the circle method to prove the asymptotic formula
\[
	p(n) \sim \frac{e^{\pi\sqrt{\frac{2n}3}}}{4\sqrt{3}\, n}.
\]
Building on their work, Rademacher \cite{Rademacher:partition,  Rademacher:expository, Rademacher:series} proved the famous  formula
\begin{equation} \label{eq:rademacher}
	p(n) = \frac{2\pi}{(24n-1)^{3/4}} \sum_{c=1}^\infty \frac{A_c(n)}{c} I_{\frac32}\pfrac{\pi\sqrt{24n-1}}{6c},
\end{equation}
where $I_{\nu}$ is the $I$-Bessel function, $A_c(n)$ is the Kloosterman sum
\begin{equation}\label{Acn}
A_c(n):=\sum_{\substack{d \bmod c\\ (d,c)=1}}e^{\pi i s(d,c)}e\(-\frac{dn}c\), \quad e(x):=e^{2\pi i x}
\end{equation}
and $s(d,c)$ is the Dedekind sum
\begin{equation} \label{eq:def-dedekind-sum}
	s(d,c) := \sum_{r=1}^{c-1} \frac rc \left( \frac {dr}c - \left\lfloor \frac {dr}c \right\rfloor - \frac 12 \right).
\end{equation}
The existence of formula \eqref{eq:rademacher} is made possible by the fact that the generating function for $p(n)$ is a modular form of weight $-1/2$, namely
\[
	q^{-\frac1{24}} \sum_{n\geq 0} p(n) q^n = \frac{1}{\eta(\tau)}, \quad q:=\exp(2\pi i \tau),
\]
where $\eta(\tau)$ denotes the Dedekind eta function.

There are a number of ways to prove \eqref{eq:rademacher}.  For example, Pribitkin \cite{Pribitkin:partition} obtained a proof using a modified Poincar\'e  series which represents $\eta^{-1}(\tau)$.
A similar technique can be used to obtain general formulas for the coefficients of modular forms of negative weight (see e.g. Hejhal \cite[Appendix D]{Hejhal} or Zuckerman \cite{zuckerman}).
The authors \cite{AA:weakharmonic} recovered \eqref{eq:rademacher} from Poincar\'e series representing a  weight $5/2$ harmonic Maass 
form whose shadow is $\eta^{-1}(\tau)$.
As pointed out by Bruinier and Ono \cite{BO:algebraic}, the exact formula can  be recovered from the algebraic formula 
\eqref{algpart} stated below
(this was partially carried out by Dewar and Murty \cite{DewarMurty}).
The equivalence of \eqref{eq:rademacher}  and \eqref{algpart}   (in a more general setting) is made 
explicit by \cite[Proposition~7]{Andersen:singular}.

The smallest parts function $\spt(n)$,  introduced by  Andrews  in \cite{Andrews:spt}, counts the number of smallest parts in the partitions of $n$.
Andrews proved that the generating function for $\spt(n)$ is given by
\[
	S(\tau) := \sum_{n\geq 1} \spt(n) q^n = 
\prod_{n \geq 1} \frac{1}{1-q^n}\left(\sum_{n \geq 1} \frac{nq^n}{1-q^n} + \sum_{n \neq 0} \frac{(-1)^nq^{n(3n+1)/2}}{(1-q^n)^2}\right).
\]
Work of Bringmann \cite{Bringmann:duke} shows that $S(\tau)$ is a component of a mock modular form of weight $3/2$ whose shadow is the Dedekind eta-function; using the circle method, she obtained an asymptotic expansion for $\spt(n)$.  In particular we have 
\[
	\spt(n) \sim \frac{\sqrt{6n}}{\pi} \, p(n).
\]
Many authors have investigated the  coefficients of this mock modular form, which is a prototype for 
modular forms of this type (see, e.g., 
\cite{ABL:spt,AhlgrenKim:spt,AGL:spt,FO:spt,Garvan:sptcong,Garvan:sptcong2,GarvanJennings:spt,Ono:spt}).

In analogy with \eqref{eq:rademacher}, we  prove the following   formula for $\spt(n)$.
\begin{theorem} \label{thm:spt-exact} 
For all $n\geq 1$, we have
\begin{equation} \label{eq:spt-exact}
	\spt(n) = \frac\pi6 (24n-1)^{\frac14} \sum_{c=1}^\infty \frac{A_c(n)}{c}\left(I_{1/2}-I_{3/2}\right)\left(\frac{\pi\sqrt{24n-1}}{6c}\right).
\end{equation}
\end{theorem}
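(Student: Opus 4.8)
The plan is to realize $\spt(n)$ as a Fourier coefficient of a weight $3/2$ harmonic Maass form and to extract it by a Rademacher-type argument, in the spirit of the authors' treatment of $p(n)$ in \cite{AA:weakharmonic}. Set $\mathcal S(\tau) := q^{-1/24}S(\tau) = \sum_{n\geq 1}\spt(n)\,q^{n-1/24}$, so that the Fourier exponents run through $(24n-1)/24$, and let $\widehat{\mathcal S}$ be its canonical non-holomorphic completion, a harmonic Maass form of weight $3/2$ on $\SL_2(\Z)$ with the eta-multiplier and shadow $\eta(\tau)$. Since $\mathcal S$ decays at the cusp (its leading exponent $23/24$ is positive), the exponential growth of $\spt(n)$ is driven entirely by the non-holomorphic part of $\widehat{\mathcal S}$, whose principal behavior is governed by the leading coefficient $q^{1/24}$ of the shadow $\eta = \sum_{k\geq 1}\pfrac{12}{k}q^{k^2/24}$.

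First I would construct the weight $3/2$ Maass--Poincar\'e series $P(\tau)$ with a single non-holomorphic seed at frequency $-1/24$ and the eta-multiplier. The point is that its shadow lies in the space of weight $1/2$ forms with this multiplier, which is one-dimensional and spanned by $\eta$; hence the shadow of $P$ is forced to be a multiple of $\eta$, and after fixing the normalization $P$ coincides with $\widehat{\mathcal S}$ (the relevant space of weight $3/2$ cusp forms being trivial). Because a single seed frequency suffices, only the one Bessel argument $\tfrac{\pi\sqrt{24n-1}}{6c}$ appears. Computing the Fourier expansion of $P$ by the usual unfolding produces, for each $n$, a series $\sum_c c^{-1}A_c(n)\cdot(\text{Whittaker})$; the Kloosterman sum is precisely $A_c(n)$ of \eqref{Acn} because the multiplier is the one that also governs $p(n)$. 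Evaluating the Whittaker function at the harmonic spectral point $s=3/4$ yields the combination $I_{1/2}-I_{3/2}$, and I would pin the constant $\tfrac\pi6(24n-1)^{1/4}$ by matching the leading $c=1$ term against Bringmann's asymptotic $\spt(n)\sim\tfrac{\sqrt{6n}}\pi p(n)$, using that the $e^x/\sqrt{2\pi x}$ leading behaviors of $I_{1/2}$ and $I_{3/2}$ cancel in the difference.

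The hard part will be convergence. For $p(n)$ the factor $I_{3/2}\pfrac{\pi\sqrt{24n-1}}{6c}\ll c^{-3/2}$ combines with $|A_c(n)|\le c$ to give an absolutely convergent series, but here $I_{1/2}\pfrac{\pi\sqrt{24n-1}}{6c}\asymp c^{-1/2}$, so the $I_{1/2}$-terms are only $O(c^{-1/2})$ and the series diverges absolutely. To make sense of \eqref{eq:spt-exact} I must establish genuine cancellation among the $A_c(n)$, namely a power saving of the form $\sum_{c\le X}A_c(n)/c \ll X^{1/2-\delta}$, which by partial summation against $I_{1/2}\pfrac{\pi\sqrt{24n-1}}{6c}$ gives convergence. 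This is the technical core: following Goldfeld and Sarnak, I would analyze the Selberg--Kloosterman zeta function attached to the weight $1/2$ eta-multiplier, whose poles correspond to exceptional eigenvalues of the weight $1/2$ Laplacian.

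The key subtlety is that the bottom eigenvalue $\lambda_0=\tfrac{3}{16}$, associated to the shadow $\eta$, lies below $\tfrac14$ and would contribute a main term of exact size $X^{1/2}$ to $\sum_{c\le X}A_c(n)/c$ --- which, inserted into the $I_{1/2}$-sum, would force logarithmic divergence. This main term must therefore vanish, and indeed it does: its coefficient is proportional to the Fourier coefficient of $\eta$ at the frequency $(24n-1)/24$, and since $\eta$ is supported on exponents $k^2/24$ while $24n-1$ is never a perfect square (squares are never $\equiv 23\pmod{24}$), that coefficient is zero. With the would-be main term removed, the remaining spectral contributions yield the required power saving past $X^{1/2}$, establishing convergence of \eqref{eq:spt-exact} and completing the identification of $\spt(n)$ with the $n$-th coefficient of the holomorphic part of $P=\widehat{\mathcal S}$.
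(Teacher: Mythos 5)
Your proposed route --- realize the completed generating function as a weight $3/2$ Maass--Poincar\'e series with the eta multiplier and read off coefficients by unfolding --- is genuinely different from the paper's, which instead derives \eqref{eq:spt-exact} from the algebraic formula \eqref{eq:spt-trace}: the function $f$ is written as a weight $0$ Poincar\'e series $f(\tau,s)$ summed over Atkin--Lehner images, the CM trace $\sum_Q f(\tau_Q,s)$ is unfolded into the Kloosterman sums $A_c(n)$ via the Salié-type identity, and the limit $s\to 1^+$ is justified by Theorem \ref{Athm}. Your convergence analysis, however, is essentially identical to the paper's Sections 6--7 (Goldfeld--Sarnak for the weight $1/2$ eta multiplier, $\lambda_0=\tfrac{3}{16}$, vanishing of the $x^{1/2}$ term), and that part is sound: the would-be main term is $\bar{\rho_0(1)}\rho_0(1-n)x^{1/2}$, and $\rho_0(1-n)=0$ for $n\geq 1$ because the relevant frequency $-(24n-1)/24$ is \emph{negative} and $\eta$ is holomorphic --- your ``$24n-1$ is never a square'' reasoning has the sign reversed but reaches the right conclusion. (Two technical points you elide: one needs the Weil-type bound $\beta=\tfrac12$ for these sums, and the Goldfeld--Sarnak argument must be extended to a non-positive second index, which is exactly what Proposition \ref{gsplus} does.)

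The genuine gap is in how you produce the Bessel combination $I_{1/2}-I_{3/2}$. First, $q^{-1/24}S(\tau)$ is \emph{not} the holomorphic part of a harmonic Maass form; Bringmann's completion \eqref{sptmaassform} requires adding $-\tfrac1{12}E_2(\tau)/\eta(\tau)$, so the holomorphic part of the resulting form $F$ has coefficients $s(n)=\spt(n)+\tfrac1{12}(24n-1)p(n)$, not $\spt(n)$, and principal part $-\tfrac1{12}q^{-1/24}$ coming from that correction term (not from the non-holomorphic part, as you assert). Second, and consequently, a weight $3/2$ Maass--Poincar\'e series with a single seed at $-1/24$, evaluated at the harmonic point $s=3/4$, yields coefficients involving the \emph{single} Bessel function $I_{2s-1}\big|_{s=3/4}=I_{1/2}$ --- this is exactly the paper's identity
\begin{equation*}
s(n)=\frac{\pi}{6}(24n-1)^{\frac14}\sum_{c=1}^\infty\frac{A_c(n)}{c}\,I_{1/2}\pfrac{\pi\sqrt{24n-1}}{6c}.
\end{equation*}
There is no way a one-seed unfolding produces a difference of two Bessel functions; the $I_{3/2}$ term enters only when you subtract $\tfrac1{12}(24n-1)p(n)$ using Rademacher's formula \eqref{eq:rademacher}, which contributes $\tfrac{\pi}{6}(24n-1)^{1/4}\sum_c\tfrac{A_c(n)}{c}I_{3/2}(\cdot)$. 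You need to add this step explicitly; as written, your claim that ``evaluating the Whittaker function at $s=3/4$ yields $I_{1/2}-I_{3/2}$'' is false, and without the correction term your Poincar\'e series cannot equal $q^{-1/24}S$ plus a non-holomorphic piece in the first place. With those repairs (and a direct computation of the normalizing constant rather than matching against Bringmann's asymptotic), your approach would give a valid alternative proof in the spirit of \cite{AA:weakharmonic}.
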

The convergence of Rademacher's formula for $p(n)$ follows from elementary estimates on the size of the $I$-Bessel function and the trivial bound $|A_c(n)|\leq c$.
By contrast, the convergence of the series for $\spt(n)$ is quite subtle, and requires non-trivial estimates for weighted sums of the Kloosterman sum \eqref{Acn}.
For this we utilize the spectral theory of half-integral weight Maass forms, discussed in more detail below.
 
We mention that Bringmann and Ono \cite{BO:fqconj}  established an exact formula for the coefficients of the weight $1/2$ mock theta function $f(q)$; this proved  conjectures of  Dragonette \cite{Dragonette:mocktheta} and Andrews \cite{Andrews:mocktheta}. In this work they required estimates for Kloosterman sums of level $2$,
which were obtained by adapting a method of Hooley.

Formulas \eqref{eq:rademacher} and \eqref{eq:spt-exact} express integers as infinite series involving values of transcendental functions.
By contrast, Bruinier and Ono \cite{BO:algebraic} (see also \cite{BO:partition}) obtained a formula for $p(n)$ as a finite sum of algebraic numbers.
Let $P(\tau)$ denote the $\Gamma_0(6)$-invariant function 
\begin{equation} \label{eq:def-P}
	P(\tau) := -\frac12\left(q\frac{d}{dq} + \frac{1}{2\pi y}\right) \frac{E_2(\tau)-2E_2(2\tau)-3E_2(3\tau)+6E_2(6\tau)}{(\eta(\tau)\eta(2\tau)\eta(3\tau)\eta(6\tau))^2}.
\end{equation}
For $n\geq 1$ define
\[
	\sQ_{1-24n}^{(1)} := \left\{ ax^2+bxy+cy^2: b^2-4ac=1-24n, \, 6\mid a>0, \, \text{ and } b\equiv 1\bmod 12 \right\}.
\]
The group 
\begin{equation}\label{eq:gamma_def}
\Gamma := \Gamma_0(6)/\{\pm 1\}
\end{equation}
 acts on this set.
For each $Q\in \sQ_{1-24n}^{(1)}$ let $\tau_Q$ denote the root of $Q(\tau,1)$ in the upper-half plane $\H$.
Bruinier and Ono showed that
\begin{equation}\label{algpart}
	p(n) = \frac{1}{24n-1} \sum_{Q\in \Gamma\backslash \sQ_{1-24n}^{(1)}} P(\tau_Q).
\end{equation}

We obtain an analogue of \eqref{algpart} for  $\spt(n)$.
Define the weakly holomorphic modular function
\begin{equation} \label{eq:def-f}
	f(\tau) := \frac{1}{24}\frac{E_4(\tau) - 4E_4(2\tau) - 9E_4(3\tau) + 36E_4(6\tau)}{(\eta(\tau)\eta(2\tau)\eta(3\tau)\eta(6\tau))^2}.
\end{equation}
Then we have the following algebraic formula.
\begin{theorem} \label{thm:spt-trace} For all $n\geq 1$, we have
\begin{equation} \label{eq:spt-trace}
	\spt(n) = \frac1{12}\sum_{Q\in \Gamma\backslash \sQ_{1-24n}^{(1)}} \left(f(\tau_Q) - P(\tau_Q)\right).
\end{equation}
\end{theorem}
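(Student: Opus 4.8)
The plan is to realize the right-hand side of \eqref{eq:spt-trace} as a single Fourier coefficient of a theta lift, and then to identify that lift with the weight $3/2$ mock modular form attached to $\spt(n)$. Observe first that the functions $f$ and $P$ from \eqref{eq:def-f} and \eqref{eq:def-P} are both $\Gamma_0(6)$-invariant of weight $0$, so that
\[
	F := f - P
\]
is a weight $0$ harmonic Maass form on $\Gamma_0(6)$, with $f$ supplying the weakly holomorphic part and $-P$ supplying the non-holomorphic completion (the same $P$ that governs the Bruinier--Ono formula \eqref{algpart}). For a negative discriminant $D = 1-24n$ the points $\tau_Q$ with $Q \in \sQ_{1-24n}^{(1)}$ are CM points, and since $D \leq -23$ each such point has trivial stabilizer in $\Gamma$; hence the right-hand side of \eqref{eq:spt-trace} is exactly $\tfrac1{12}$ times the CM trace
\[
	\Tr_D(F) := \sum_{Q \in \Gamma\backslash\sQ_{1-24n}^{(1)}} F(\tau_Q).
\]

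Next I would apply the Bruinier--Funke theta lift to $F$. Pairing $F$ against the Kudla--Millson theta kernel for the lattice of signature $(1,2)$ adapted to $\Gamma_0(6)$ and to the congruence conditions defining $\sQ_{1-24n}^{(1)}$ (namely $6\mid a$ and $b\equiv 1 \bmod 12$), and regularizing the divergent integral since $F$ has poles at the cusps, produces a harmonic Maass form $I(\tau, F)$ of weight $3/2$ with a prescribed multiplier. The fundamental geometric property of the lift is that its Fourier coefficient at index $D$ equals, up to a normalizing constant, the weighted trace $\Tr_D(F)$; the bookkeeping of the stabilizer orders $|\bar\Gamma_Q|$, the widths of the cusps of $\Gamma_0(6)$, and the factor $\tfrac1{24}$ built into $f$ is what is expected to produce the constant $\tfrac1{12}$.

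The crux is then to prove that $I(\tau,F)$ is precisely the weight $3/2$ mock modular form whose holomorphic part is the generating function of $\spt(n)$ (up to the normalization and the shift $q^n \mapsto q^{24n-1}$). I would establish this by matching invariants: first compute $\xi_{3/2}\, I(\tau,F)$ and verify that it equals the Dedekind eta function, which is the shadow of the $\spt$ form; second, compute the principal parts of $I(\tau,F)$ at all cusps from the known principal parts of $f - P$, and check that they coincide with those of the $\spt$ form. Two harmonic Maass forms of weight $3/2$ with the same multiplier, the same shadow, and the same principal parts differ by a holomorphic cusp form, so I would conclude by verifying that the relevant space of cusp forms is trivial. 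Comparing the coefficients at index $D = 1-24n$ then yields \eqref{eq:spt-trace}. I expect this identification to be the main obstacle: it requires the explicit divisor of $F$ at each cusp of $\Gamma_0(6)$ together with careful control of the multiplier and plus-space conditions, since an error in the shadow or in any single principal part would corrupt the final constant. As a secondary route, once Theorem~\ref{thm:spt-exact} is in hand, one may instead invoke the general equivalence between Rademacher-type series and CM traces recorded in \cite[Proposition~7]{Andersen:singular} to pass directly from \eqref{eq:spt-exact} to \eqref{eq:spt-trace}; this bypasses a fresh computation of the lift but still demands that the arithmetic normalizations be reconciled.
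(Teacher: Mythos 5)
Your overall architecture --- realize the trace as a Fourier coefficient of a Bruinier--Funke lift, then pin the lift down by its shadow, its principal part, and the triviality of the relevant space of cusp forms --- matches the paper's, and that identification step (Proposition~\ref{prop:I=F}) goes through essentially as you describe. But there is a genuine error at your starting point: $f-P$ is not a harmonic Maass form of weight $0$, and $-P$ is not the ``non-holomorphic completion'' of $f$. The function $f$ in \eqref{eq:def-f} is already a complete weakly holomorphic modular function needing no completion, while $P$ in \eqref{eq:def-P} is (up to a constant) the weight $-2$ raising operator applied to a weakly holomorphic form, hence an eigenfunction of $\Delta_0$ with \emph{nonzero} eigenvalue. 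So $f-P$ is neither weakly holomorphic nor harmonic nor a Laplace eigenfunction, and it lies in no input space for which the Bruinier--Funke coefficient formula (or its extensions in the Alfes and Bruinier--Ono generalizations) is stated; your ``single lift of $F=f-P$'' would in practice have to be two separate lifts with different normalizations.

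The paper sidesteps this entirely: it lifts only $f\in M_0^!(\Gamma_0(6))$ and identifies $I(\tau,f)$ with $24$ times the vector-valued form built from \eqref{sptmaassform}, whose holomorphic coefficients are $s(n)=\spt(n)+\frac1{12}(24n-1)p(n)$, not $\spt(n)$. The geometric coefficient formula then gives $\frac1{12}\sum_Q f(\tau_Q)=s(n)$, and the $-P(\tau_Q)$ term in the theorem is not produced by any lift at all --- it is imported wholesale from the already-proved Bruinier--Ono formula \eqref{algpart}, which converts $\frac1{12}\sum_Q P(\tau_Q)$ into $\frac1{12}(24n-1)p(n)$ and cancels the extra term in $s(n)$ via \eqref{eq:def-s}. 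If you restructure your argument this way (lift $f$ alone; quote \eqref{algpart} for the $P$-trace), the rest of your plan is sound, though you should not gloss over the vector-valued bookkeeping: the relation $I_h(\tau,f)=\leg{12}{h}I_1(\tau,f)$, forced by the Atkin--Lehner eigenvalues of $f$, is what produces the correct multiplier, and the principal-part computation reduces to the single coefficient ${\bf t}_f(1,-\tfrac1{24})=-2$. Finally, your fallback of deducing the trace formula from Theorem~\ref{thm:spt-exact} is circular in this paper: the exact formula is itself derived from Theorem~\ref{thm:spt-trace} together with the Poincar\'e series construction of $f$ and the Kloosterman sum estimates of Theorem~\ref{Athm}.
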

Bruinier and Ono showed that the values $P(\tau_Q)$ are algebraic numbers with bounded denominators, and the classical theory of complex multiplication implies that the values $f(\tau_Q)$ are algebraic as well (see, for instance, Section 6.1 of \cite{Zagier:modular}).

The proof of Theorem \ref{thm:spt-trace} relies on the theta lift of Bruinier and Funke \cite{BF:traces} which relates coefficients of harmonic Maass forms of weight $3/2$ to traces of CM values of modular functions.
Generalizations of this lift are given in \cite{alfes,alfes-ehlen,BO:algebraic}.

\begin{example}
We illustrate the simplest case of Theorem \ref{thm:spt-trace}. The class number of $\Q(\sqrt{-23})$ is $3$, so
 $\Gamma \backslash \sQ_{-23}^{(1)}$ consists of $3$ classes.  These are represented by the forms
\[
	Q_1 = 6x^2+xy+y^2, \quad Q_2 = 12x^2+13xy+4y^2, \quad Q_3 = 18x^2+25xy+9y^2,
\]
whose roots are
\[
	\tau_1 = \frac{-1+\sqrt{-23}}{12}, \quad \tau_2 = \frac{-13+\sqrt{-23}}{24}, \quad \tau_3 = \frac{-25+\sqrt{-23}}{36}.
\]
Let $g=f-P$. Since the values $\{g(\tau_k)\}$ are conjugate algebraic numbers, we find that
\begin{equation}\label{classpoly}
	\prod_{k=1,2,3}(x-g(\tau_k)) = x^3 - 12x^2 - \frac{1008}{23}x - \frac{1728}{23}
\end{equation}
by approximating each $g(\tau_k)$ using \eqref{eq:def-P} and \eqref{eq:def-f}.
This shows that $\spt(1)=1$.
Computing the roots of the polynomial in \eqref{classpoly} gives the values
\[
g(\tau_1) = 4\left(1+\frac{2}{23}\beta+\frac{22}{\beta}\right), \quad
g(\tau_2) = 4\left(1+\frac{2}{23}\zeta_3\beta+\frac{22\zeta_3^2}{\beta}\right), \quad
g(\tau_3) = \bar{g(\tau_2)},
\]
where $\zeta_3 := e^{2\pi i/3}$ and
\[
	\beta := \sqrt[3]{\frac{23}{2}\(391+21\sqrt{69}\)}.
\]
\end{example}

We return to the problem of obtaining estimates for weighted sums of the $A_c(n)$, which is 
of independent interest.  Define
\[
	{\bf A}_n(x) := \sum_{c\leq x} \frac{A_c(n)}{c}.
\]
Lehmer \cite[Theorem 8]{Lehmer:series} proved the sharp Weil-type bound 
\begin{equation} \label{eq:super_weil} 
	|A_c(n)|<2^{\omega_o(c)} \sqrt c,
\end{equation}
where $\omega_o(c)$ is the number of distinct odd primes dividing $c$.
Rademacher \cite{rademacher-indian}  later simplified Lehmer's treatment of the sums $A_c(n)$ using 
Selberg's formula \cite{Whiteman}
\[
	A_c(n) = \sqrt{\frac c3} \sum_{\substack{\ell\bmod {2c} \\ (3\ell^2+\ell)/2 \equiv -n(c)}} (-1)^\ell \cos\left(\frac{6\ell+1}{6c}\pi\right).
\]
From \eqref{eq:super_weil} we obtain
\[{\bf A}_n(x) \ll_{\epsilon} x^{\frac 12+\epsilon},\]
Since this   is not sufficient to prove 
 the convergence of the series in \eqref{eq:spt-exact}, we require a power savings estimate for ${\bf A}_n(x)$.
In Section \ref{kloostersection}  we adapt the method of Goldfeld and Sarnak \cite{GS:kloosterman} to relate ${\bf A}_n(x)$ to the spectrum of the weight $1/2$ hyperbolic Laplacian on $\SL_2(\Z)\backslash\H$.
We prove a result which has the following     corollary.
\begin{theorem} \label{Athm}
Suppose that $-n=\frac{k(3k\pm 1)}2$ is  a pentagonal number.  Then for any $\epsilon>0$ we have
\[
	\sum_{c\leq x} \frac{A_c(n)}{c} = (-1)^k \, \mfrac{12\sqrt{3}}{\pi^2}\,  x^{\frac12}+O\(x^{\frac16+\epsilon}\).
\]
If $-n$ is not  pentagonal  then we have 
\[
	\sum_{c\leq x} \frac{A_c(n)}{c} = O\(x^{\frac16+\epsilon}\).
\]
The implied constants depend on $n$ and $\epsilon$.
\end{theorem}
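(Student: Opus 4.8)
\smallskip

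The plan is to adapt the Goldfeld--Sarnak method \cite{GS:kloosterman}, relating ${\bf A}_n(x)$ to the spectrum of the weight $1/2$ hyperbolic Laplacian $\Delta_{1/2}$ on $\SL_2(\Z)\backslash\H$ twisted by the eta-multiplier. The sum $A_c(n)$ is precisely the weight $1/2$ Kloosterman sum for this multiplier, so I would first form the Selberg--Kloosterman zeta function
\[
	Z_n(s) := \sum_{c>0} \frac{A_c(n)}{c^{2s}},
\]
which by the Weil-type bound \eqref{eq:super_weil} converges absolutely for $\Re s > \tfrac34$. This Dirichlet series is assembled from the Fourier coefficients of a weight $1/2$ Poincar\'e series, and its analytic continuation is governed by the resolvent of $\Delta_{1/2}$: spectrally expanding the Poincar\'e series (equivalently, the automorphic Green's function) continues $Z_n(s)$ meromorphically to $\Re s > \tfrac12$, with poles only at the points $s_j \in (\tfrac12,1]$ for which $\lambda_j = s_j(1-s_j)$ is an eigenvalue of $\Delta_{1/2}$, and with residues built from the Fourier coefficients of the associated eigenforms.

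The feature that produces the main term is that the eta-multiplier carries a genuine exceptional eigenvalue below $\tfrac14$: the Dedekind eta function, regarded as a residual weight $1/2$ form, contributes the eigenvalue $\lambda_0 = \tfrac14(1-\tfrac14) = \tfrac{3}{16}$, i.e. the spectral point $s_0 = \tfrac34$ (consistent with the abscissa of convergence found above). Using that $\tfrac{3}{16}$ is the smallest eigenvalue for this multiplier, with $\eta$ the unique form attaining it, I would show $s_0 = \tfrac34$ is the only pole of $Z_n(s)$ in $\tfrac12 < \Re s \le 1$. Its residue is proportional to the Fourier coefficient of $\eta$ at the frequency $(1-24n)/24$; by the pentagonal number theorem $\eta(\tau) = \sum_k (-1)^k q^{(6k-1)^2/24}$, this coefficient equals $(-1)^k$ when $1-24n = (6k\mp1)^2$, that is when $-n = k(3k\pm1)/2$ is pentagonal, and vanishes otherwise. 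Tracking the archimedean factors from the Bessel/Whittaker transforms in the weight $1/2$ Kuznetsov formula is what fixes the constant $\frac{12\sqrt3}{\pi^2}$.

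I would then recover ${\bf A}_n(x)$ from $Z_n(s)$ by a Perron-type contour integration against a smoothed cutoff: with $w = 2s-1$ the partial sum becomes $\frac{1}{\pi i}\int Z_n(s)\,\frac{x^{2s-1}}{2s-1}\,ds$. Moving the contour left past $s_0 = \tfrac34$ collects the residue there, producing the main term $(-1)^k\frac{12\sqrt3}{\pi^2}\,x^{1/2}$ since $x^{2s_0-1} = x^{1/2}$, while the shifted integral lies just to the right of $\Re s = \tfrac12$, the edge of the continuous spectrum, where $Z_n(s)$ is holomorphic. To convert the trivial $O(x^{1/2+\epsilon})$ into the power saving $O(x^{1/6+\epsilon})$, I would truncate the vertical integral at height $T$, bound $Z_n(s)$ in the strip by a power of $T$ obtained from a spectral large-sieve (mean-value) estimate for the weight $1/2$ coefficients $\sum_{|t_j|\le T}|\rho_j(n)|^2$, and optimize $T$ against the smoothing error exactly as in Goldfeld--Sarnak.

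I expect the principal difficulty to be twofold. First, the half-integral weight spectral apparatus---the Kuznetsov/Bruggeman formula for the eta-multiplier, the continuation of the resolvent, and the large-sieve inequality---must be set up with every archimedean normalization made explicit, since none of this transfers automatically from the integral-weight theory and the precise Bessel-transform constants are exactly what yield $\frac{12\sqrt3}{\pi^2}$. Second, and more delicate, the $x^{1/6+\epsilon}$ error demands uniform control of $Z_n(s)$ down to $\Re s = \tfrac12$; cleanly isolating the genuine pole at $s_0 = \tfrac34$ coming from $\eta$ while retaining enough decay in $|\Im s|$ along $\Re s = \tfrac12 + \delta$ to justify the contour shift and the optimization in $T$ is the crux of the argument.
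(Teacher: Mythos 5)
Your overall strategy coincides with the paper's: Section 6 carries out exactly the Goldfeld--Sarnak program you describe (the Selberg--Kloosterman zeta function, its meromorphic continuation to $\re(s)>\tfrac12$ via an inner product of Poincar\'e series, a contour shift collecting residues at exceptional spectral points, and an optimization against the pointwise bound $\beta=\tfrac12$ on the Kloosterman sums to get the exponent $\tfrac{\beta}{3}=\tfrac16$), and Section 7 specializes it to the eta-multiplier, where the residue at $s_0=\tfrac34$ is evaluated using the pentagonal number theorem together with the Petersson norm $\|y^{1/4}\eta\|^2=\tfrac{\pi}{3\sqrt6}$.

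There is, however, one genuine gap. You assert that $s_0=\tfrac34$ is the only pole of $Z_n(s)$ in $\tfrac12<\re(s)\le 1$, justified by the fact that $\tfrac3{16}$ is the smallest eigenvalue and that $\eta$ is the unique form attaining it. That does not suffice: minimality and uniqueness of the bottom eigenvalue leave open the possibility of further exceptional eigenvalues $\lambda_j\in(\tfrac3{16},\tfrac14)$, each contributing a pole at some $s_j\in(\tfrac12,\tfrac34)$ and hence a secondary term of size $x^{2s_j-1}$; any such $s_j>\tfrac7{12}$ would destroy the claimed $O(x^{1/6+\epsilon})$ error. The paper must invoke Bruggeman's theorem (and, for a sharper bound, a Shimura-type lift combined with computations of Booker--Str\"ombergsson) to establish $\lambda_1>\tfrac14$; this is an external spectral input, not something recoverable from the structure of $\eta$ alone. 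Two smaller points: (i) since $\sqrt{i}\,A_c(n)=S(1,1-n,c,\chi)$ has nonpositive second index, the Goldfeld--Sarnak inner-product identity (stated for positive indices) must be modified --- the paper pairs $P_m(\cdot,s,\chi,k)$ against $P_{1-n}(\cdot,s+2,\overline{\chi},-k)$; and (ii) the constant $\tfrac{12\sqrt3}{\pi^2}$ requires not only the archimedean normalizations you flag but also the explicit Petersson norm of $\eta$, computed in the paper by Rankin--Selberg unfolding (and the paper notes that the constant in Goldfeld--Sarnak's own formula (3.2) is not correct, so the caution you express there is warranted).
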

This  can be compared with Kuznetsov's bound \cite{Kuznetsov} 
\[
	\sum_{c\leq x}\frac{k(m,n;c)}{c}\ll_{m,n}x^\frac16(\log x)^\frac13
\]
where 
$k(m,n;c)$ is the ordinary Kloosterman sum defined in \eqref{eq:ordinary-kloo} below.

\section{Preliminaries}
We briefly introduce some of the objects which we will require. 
\subsection{Quadratic forms and Atkin-Lehner involutions}
Let $M_0^!(\Gamma_0(N))$ denote the space of modular functions on $\Gamma_0(N)$ whose poles are supported at the cusps.
We will mainly work with $N=6$; in this case, there are four cusps, one corresponding to each divisor of $6$.
To move among the cusps, the Atkin-Lehner involutions \cite{AtkinLehner} are useful. 
For each divisor $d$ of $6$, we define the Atkin-Lehner involution $W_d$ on $M_0^!(\Gamma_0(6))$ as the map $f\mapsto f \sl_0 W_d$, where
\begin{equation} \label{eq:def-W-d}
	W_1 = \pMatrix 1001, \ W_2 = \frac{1}{\sqrt2}\pMatrix 2{-1}6{-2}, \ W_3 = \frac{1}{\sqrt3}\pMatrix 3163, \ W_6 = \frac{1}{\sqrt{6}} \pMatrix0{-1}60,
\end{equation}
and $(f\sl_0 \gamma)(\tau) := f(\gamma \tau)$.
The normalizing factors are chosen so that $W_d\in\SL_2(\R)$, which will be convenient later.
If $d,d'\mid 6$, then
\[
	W_d  W_{d'} = W_{\frac{dd'}{(d,d')^2}}.
\]

Suppose that $r\in\{1,5,7,11\}$ and that $D>0$, and
define
\[
	\sQ_{-D}^{(r)} := \left\{ ax^2+bxy+cy^2: b^2-4ac=-D, \ 6\mid a>0, \text{ and } \ b\equiv r\bmod{12} \right\}.
\] 
Let $\Gamma_0^*(6)\subset \SL_2(\R)$ denote the group generated by $\Gamma_0(6)$ and the Atkin-Lehner involutions $W_d$ for $d\mid 6$.
Matrices $g=\pmatrix \alpha\beta\gamma\delta\in\Gamma_0^*(6)$ act on binary quadratic forms on the left by
\begin{equation} \label{eq:Q-action}
	g Q(x,y) := Q(\delta x-\beta y,-\gamma x+\alpha y).
\end{equation}
This action is compatible with the action $g\tau:=\frac{\alpha\tau+\beta}{\gamma\tau+\delta}$ on the root $\tau_Q\in \H$ of $Q(\tau,1)$:
for   $g\in \Gamma_0^*(6)$, we have
\begin{equation} \label{eq:root-compatible}
	g\,\tau_Q = \tau_{g Q}.
\end{equation}
Define
\[
	\sQ_{-D} := \bigcup_{r\in \{1,5,7,11\}} \sQ_{-D}^{(r)}.
\]
A computation involving \eqref{eq:def-W-d} and \eqref{eq:Q-action} shows that
\begin{equation} \label{eq:W-d-r-r'}
	W_d : \sQ_{-D}^{(r)} \longleftrightarrow \sQ_{-D}^{(r')}
\end{equation}
is a bijection, where
\begin{equation} \label{eq:r'-r-cases}
r'\equiv (2d\mu(d)-1)r\pmod{12}.
\end{equation}
For each $r$,  we have
\begin{equation} \label{eq:Qn-decomp-W-d}
	\sQ_{-D} = \bigcup_{d|6} W_d \, \sQ_{-D}^{(r)}.
\end{equation}

\subsection{Quadratic spaces of signature $(1, 2)$} 
The proof of Theorem~\ref{thm:spt-trace} uses a theta lift of Bruinier-Funke associated to  an isotropic rational quadratic space of 
signature $(1,2)$.  To access the necessary results requires some background, which we develop briefly in the next two subsections.
For further details, see \cite{BF:traces, Bruinier:Hilbert}.

Let $V$ be an isotropic rational quadratic space of signature $(1, 2)$
with non-degenerate symmetric bilinear form $(\cdot ,\cdot )$. Let the positive square-free integer $d$ denote the discriminant
of the  quadratic form $q$ given by $q(v)=\frac12(v,v)$. We may view $V$ as the subspace of pure quaternions
with oriented basis
\[\left\{\pMatrix01{-1}0,\   \pMatrix100{-1},\    \pMatrix0110\right\}\]
in the quaternion algebra $M_2(\Q)$;
in other words we have
\[V=\left\{X=\pMatrix {x_1}{x_2}{x_3}{-x_1}\ : \ x_i\in \Q\right\}.\]
With this identification we have
\[q(X)=d\Det(X),\ \ \ (X, Y)=-d\Tr(XY).\]

We identify $G:=\Spin(V)$ with $\SL_2(\Q)$ and $\overline G\simeq \operatorname{PSL}_2(\Q)$ with its image in $\operatorname{SO}(V)$.
The group $G$ acts on $V$ by conjugation; we write
\[g.X:=gXg^{-1}.\]
The group $G(\R)$ acts transitively on the 
 Grassmannian $\D$ of positive lines in $V$:
\[\D:=\left\{z\subseteq V(\R)\ : \ \dim z=1 \ \ \text{and}\ \  q|_z>0\right\}.\]
Choosing  the base point $z_0=\spn\(\pmatrix01{-1}0\)\in \D$, we find that $z_0$ is stabilized 
by $\operatorname{SO}_2(\R)$, so that 
\[\D\simeq \operatorname{SO}_2(\R)\backslash G(\R)\]
is a Hermitian symmetric space.

An explicit isomorphism $\H\simeq \D$  can be described as follows.  For $\tau=x+i y\in \H$, 
 let 
\[g_\tau:=\frac1{\sqrt{y}}\pMatrix{x }{- y}{1}0\in G(\R),\]
so that $g_\tau i=\tau$.
Then define 
\begin{equation} \label{eq:H-D-iso}
	X(\tau):= g_\tau.\pMatrix01{-1}0=\frac1{y}\pMatrix{- x}{x^2+y^2}{-1}{ x},
\end{equation}
and define an isomorphism $\H\rightarrow \D$ by 
$\tau\mapsto\spn\(X(\tau)\)$.
We have
\begin{equation} \label{eq:X-g}
	X(g\tau) = g.X(\tau)
\end{equation}
for all $g\in \SL_2(\R)$.

Let $L\subseteq V(\Q)$ be the lattice
\begin{equation}\label{Ldef}
L:=\left\{\pMatrix b{c/6}{-a}{-b}\ : \ a,b,c\in \Z \right\}. 
\end{equation}
The dual lattice is 
\[L'=\left\{\pMatrix {b/12}{c/6}{-a}{-b/12}\ : \ a,b,c\in \Z \right\},\]
and we  identify $L'/L$ with $\Z/12\Z$.

The group $\Gamma_0(6)\subseteq \Spin(L)$ fixes $L$. 
Let 
\[\{\mathfrak e_h : h\in \Z/12\Z\}\]
 denote the standard basis of the group ring $\C[L'/L]$.
A computation shows that matrices $g=\pmatrix abcd \in \Gamma_0^*(6)$ act on $\C[L'/L]$ by
\begin{equation}\label{frickeaction}
	g.  \mathfrak e_h = \mathfrak e_{(1+2bc)h}.
\end{equation}
In particular, if $g\in \Gamma_0(6)$, then $g$ acts trivially on $L'/L$.

Let 
\[M:=\Gamma_0(6)\backslash \D\]
 be the modular curve.
If $X\in V(\Q)$ has  positive length,  then we define
\[D_X:= \spn(X)\subseteq \D.\]
For each positive rational number $m$ and each $h\in L'/L$, define
\[L_{h,m} := \left\{X\in L+h\  :\  q(X)=m\right\}.
\]
Then $\Gamma_0(6)$ acts on $L_{h,m}$ with finitely many orbits.

The set $\operatorname{Iso}(V)$ of isotropic lines in $V(\Q)$ is identified with the the cusps of $G(\Q)$ via the map
\[(\alpha:\beta)\mapsto \spn\(\pMatrix {-\alpha \beta}{\alpha^2}{-\beta^2}{\alpha\beta}\).\]
The cusps of $M$ are the $\Gamma_0(6)$ classes of $\operatorname{Iso}(V)$;
these are represented by the lines $\ell_j:=\spn(X_j)$, where
\[X_0:=\pMatrix0100,\ \ X_1:=\pMatrix00{-1}0, \ \ X_2:=\pMatrix{-2}{1}{-4}{2}, \ \text{and}\ \ X_3:=\pMatrix{-3}{1}{-9}{3}.\]
For each $\ell\in \operatorname{Iso}(V)$ choose $\sigma_\ell\in \SL_2(\Z)$ with 
$\sigma_\ell \ell_0=\ell$, and let $\alpha_\ell$ be the width of the cusp $\ell$.
For each $\ell$, there is a positive rational number $\beta_\ell$ such that
\[
	\ell_0 \cap \sigma_\ell^{-1} L = \pMatrix{0}{\beta_\ell\Z}{0}{0},
\]
and we define $\varepsilon_\ell := \alpha_\ell / \beta_\ell$.

Suppose that $q(X)<0$ and that $Q(X)\in -6(\Q^\times)^2$.
Then (see \cite[Lemma 3.6]{Funke:Heegner}) $X$ is orthogonal to two isotropic lines, $\spn(Y)$ and $\spn(\widetilde Y)$.
 We associate $\ell_X:=\spn(Y)$  to $X$ if $(X, Y, \widetilde Y)$ is a positively
oriented basis of $V$. We then have $\ell_{-X}=\spn(\widetilde Y)$.
For each $\ell$, define
\[
	L_{h,-6m^2,\ell} := \left\{ X\in L_{h,-6m^2} : \ell_X = \ell \right\}.
\]
Then $\Gamma_0(6)$ acts on these sets, and equation (4.7) of \cite{BF:traces} shows that
\begin{equation} \label{eq:def-v-ell}
	v_\ell(h,-6m^2) := \left| \Gamma_0(6)\backslash L_{h,-6m^2,\ell} \right| = 
	\begin{cases}
		2m\varepsilon_\ell & \text{ if } L_{h,-6m^2,\ell} \neq \emptyset, \\
		0 & \text{ otherwise.}
	\end{cases}
\end{equation}

\subsection{Harmonic Maass forms and the theta lift}\label{thetaliftsec}

Let $\Mp_2(\mathbb{R})$ denote the metaplectic two-fold cover of $\SL_2(\mathbb{R})$. The elements of this group are pairs $(M,\phi(\tau))$, where
\[
	M=\pmatrix abcd \in \SL_2(\mathbb{R}),
\]
and $\phi:\H\to \C$ is a holomorphic function satisfying $\phi(\tau)^2=(c\tau+d)$.

Let $\Mp_2(\Z)$ denote the inverse image of $\SL_2(\mathbb{Z})$ under the covering map; this group is generated by
\[
	T = \left(\pMatrix 1101, 1\right) \quad \text{ and } \quad S = \left(\pMatrix0{-1}10, \sqrt{\tau}\right)
\]
(here and throughout, we take the principal branch of $\sqrt{\cdot}$\,).
We fix the lattice $L$ defined by \eqref{Ldef}.
The   Weil representation (see Chapter 1 of \cite{Bruinier:borcherds})
\[
	\rho_L : \Mp_2(\mathbb{Z}) \to \GL\left(\mathbb{C}[L'/L]\right)
\]
 is defined by
\begin{equation}\label{weilrep}
\begin{aligned}
	\rho_L(T) \mathfrak{e}_h &= e\left(-\tfrac{h^2}{24}\right) \mathfrak{e}_h, \\
	\rho_L(S) \mathfrak{e}_h &= \frac{\sqrt{i}}{\sqrt{12}} \sum_{h' \in L'/L} e\(\frac{hh'}{12}\) \mathfrak{e}_{h'}.
\end{aligned}
\end{equation}

Denote by $H_{k, \rho_L}$ the space of weak harmonic Maass forms of weight $k$ for the representation $\rho_L$; these are functions
$F:\mathbb{H} \to \mathbb{C}[L'/L]$ which satisfy the following conditions:
\begin{enumerate}
\item For  $(\gamma,\phi)\in \Mp_2(\mathbb{Z})$,
\[
f(\gamma \tau) = \phi(\tau)^{2k} \, \rho_L(\gamma,\phi) \, f(\tau).
\]
\item $\Delta_k f =0$ , where 
\[
\Delta_k := -y^2 \left( \frac{\partial^2}{\partial x^2} + \frac{\partial^2}{\partial y^2} \right) +   i k y \left( \frac{\partial}{\partial x} + i \frac{\partial}{\partial y}  \right).
\]
\item $f$ has at most linear exponential growth at $\infty$.
\item $\xi_k f$ is holomorphic at $\infty$, where
\[\xi_k := 2iy^k \bar{\frac{\partial }{\partial \bar{\tau}}}.\]
\end{enumerate}

The theta lift of $f\in M_0^!(\Gamma_0(6))$ is given by 
\begin{equation}\label{thetalift}
I(\tau, f):=\int_Mf(z)\Theta(\tau, z)=\sum_{h\in L'/L}I_h(\tau, f) {\mathfrak e}_h,
\end{equation}
with
\begin{equation} \label{thetalift-h}
I_h(\tau, f):=\int_M f(z)\theta_h(\tau, z),
\end{equation}
where $\theta_h(\tau, z)$ and $\Theta(\tau, z)$ are defined  in \S 3.2 of \cite{BF:traces}.

We have
\[I(\tau, f)\in H_{\frac32, \rho_L}.\]
To see this, note that the transformation properties follow from those of the theta function (\S 3.2 of \cite{BF:traces}).
The other conditions follow from the explicit description of the Fourier expansion of $I(\tau, f)$ 
given in Theorem~4.5 of \cite{BF:traces}. 

By (3.7) and (3.9) of \cite{BF:traces} and \eqref{eq:X-g} we have the relation
\[
	\theta_h(\tau,gz) = \theta_{g^{-1}h}(\tau,z)
\]
for any $g\in \Gamma_0^*(6)$.
From this and \eqref{thetalift-h} it follows that
\begin{equation} \label{eq:I-gh}
	I_{gh}(\tau,f) = I_h(\tau,f\big|_0 g).
\end{equation}

By Theorem~4.5 of \cite{BF:traces} we have 
\begin{equation}\label{bfthm}
I_h(\tau, f)=\sum_{m\geq 0}{\bf t}_f(h, m)q^m+\sum_{m>0} {\bf t}_f(h, -6m^2)q^{-6m^2}+N(\tau).
\end{equation}
A formula for the non-holomorphic part $N(\tau)$ is given in \cite{BF:traces} but we do not include it here.
Recall the definition \eqref{eq:gamma_def}. The terms of positive index $m$ in \eqref{bfthm} are given by 
\begin{equation}\label{posterms}
{\bf t}_f(h, m)
=\sum_{X\in \Gamma\backslash L_{h,m}} |\Gamma_X|^{-1} f(D_X),
\end{equation}
where 
$\Gamma_X\subseteq\Gamma$ is the stabilizer of $X$.
Let 
\[f\(\sigma_{\ell}\tau\)=\sum_{n\in \frac{1}{\alpha_{\ell}} \Z} a_{\ell}(n)q^{n}\]
denote the Fourier expansion of $f$ at the cusp $\ell$, where $\alpha_\ell$ denotes the width of $\ell$ as in Section~2.2.
By Proposition 4.7 of \cite{BF:traces}, the terms of negative index $-6m^2$ in \eqref{bfthm} are given by
\begin{equation}\label{negterms}
{\bf t}_f(h, -6m^2)= - \!\!\!\!\!\sum_{\ell\in \Gamma\backslash \operatorname{Iso}(V)} \sum_{n\in \frac{2m}{\beta_\ell}\N} a_\ell(-n) \left(v_\ell(h,-6m^2) e\left(\frac{r_{h,\ell} n}{2m}\right) +  v_\ell(-h,-6m^2) e\left(\frac{r_{-h,\ell}n}{2m}\right)\right),
\end{equation}
where $r_{h,\ell}$ is defined by 
\[
	\sigma_{\ell_X}^{-1}X=\pMatrix m {r_{h,\ell}} 0{-m} \ \text{ for any }X \in L_{h,-6m^2,\ell}.
\]
Note that ${\bf t}_f(h, -6m^2)=0$ for $m$ sufficiently large. 

\section{Proof of the algebraic formula}
The goal of this section is to prove Theorem~\ref{thm:spt-trace}.
To prepare for the proof,  recall the definition \eqref{eq:def-f} of 
$f\in M_0^!(\Gamma_0(6))$,
and  define $F(\tau)$ by 
\begin{equation}
\begin{aligned}
\label{sptmaassform}
F(\tau)&:=\sum_{n=1}^\infty\spt(n)q^{n-\frac1{24}}-\frac1{12}\cdot\frac{E_2(\tau)}{\eta(\tau)}+\frac{\sqrt{3i}}{2 \pi}
\int_{- \overline{\tau}}^{i \infty} \frac{\eta(w)}{\left(\tau+w \right)^{\frac{3}{2}}} dw\\
&=\sum_{n=0}^\infty s(n)q^{n-\frac1{24}}+\frac{\sqrt{3i}}{2 \pi}
\int_{- \overline{\tau}}^{i \infty} \frac{\eta(w)}{\left(\tau+w \right)^{\frac{3}{2}}} dw,
\end{aligned}
\end{equation} 
so that 
\begin{equation} \label{eq:def-s}
s(n) = \spt(n) + \frac{1}{12}(24n-1)p(n).
\end{equation}

Work of Bringmann \cite{Bringmann:duke} shows that $F(24\tau)$ is a  harmonic Maass form on $\Gamma_0(576)$ of weight $3/2$ and character $\leg{12}{\bullet}$, and that $F(24\tau)$ has eigenvalue $-1$
under the Fricke involution $W_{576}$.
Using these facts we find that
\begin{equation}\label{Ftrans}
F(\tau+1)=e\(-\tfrac1{24}\) F(\tau), \qquad  F(-1/\tau)=i^\frac12 \tau^\frac32 F(\tau).
\end{equation}
Now set
\begin{equation}\label{sptmaassdef}
\mathcal F(\tau):=\sum_{h\in L'/L}\(\tfrac{12}h\)F(\tau) {\mathfrak e}_h
\end{equation}
(we use the identification of $L'/L$ with $\Z/12\Z$ to define $\tleg{12}h$).
Using \eqref{Ftrans}, \eqref{weilrep}, 
and the fact that 
\[\sum_{h\in L'/L} \tleg{12}h e\tleg{hh'}{12}=\tleg{12}{h'}\sqrt{12},\]
we find that
\[\mathcal F(\tau)\in H_{\frac32,  \rho_L}.\]

\begin{proposition} \label{prop:I=F}
 We have $I(\tau, f)=24\, \mathcal F(\tau)$.
\end{proposition}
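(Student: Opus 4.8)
The plan is to set $G(\tau):=I(\tau,f)-24\,\mathcal F(\tau)$ and show $G\equiv0$. Since the excerpt already places both $I(\tau,f)$ and $\mathcal F$ in $H_{\frac32,\rho_L}$, we have $G\in H_{\frac32,\rho_L}$, and a harmonic Maass form is determined up to a \emph{holomorphic} form of the same weight and type by the pair (its image under $\xi_{3/2}$, its principal part). So the strategy is to match these two pieces of data and then argue that the residual holomorphic form must vanish.

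First I would match the shadows. By the construction of $F$ in \eqref{sptmaassform}, the only non-holomorphic contribution is the Eichler integral $\frac{\sqrt{3i}}{2\pi}\int_{-\bar\tau}^{i\infty}\eta(w)(\tau+w)^{-3/2}\,dw$, so applying $\xi_{3/2}$ to $\mathcal F$ returns an explicit nonzero constant multiple of the weight-$\tfrac12$ vector-valued eta series $\sum_{h}\tleg{12}{h}\,\eta(\tau)\,\mathfrak e_h$. On the other side, $\xi_{3/2}I(\tau,f)$ is read off from the non-holomorphic term $N(\tau)$ in \eqref{bfthm}; by Bruinier--Funke it is a weight-$\tfrac12$ unary theta function whose coefficients are governed by the constant terms of $f$ at the four cusps of $\Gamma_0(6)$. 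Because $f$ in \eqref{eq:def-f} is an explicit ratio, with $f=q^{-1}+O(1)$ at $\infty$ and known behavior at the remaining cusps via the Atkin--Lehner involutions $W_d$, I expect this theta function to coincide with $\sum_h\tleg{12}{h}\,\eta\,\mathfrak e_h$ up to the factor $24$; matching the scalar constants is then routine, and gives $\xi_{3/2}G=0$, so $G$ is weakly holomorphic.

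Next I would match principal parts. For $\mathcal F$ this is immediate: by \eqref{sptmaassform}--\eqref{eq:def-s} only the $n=0$ term produces a negative exponent, with $s(0)=-\tfrac1{12}$, so the $h$-component of $24\,\mathcal F$ has principal part $-2\tleg{12}{h}\,q^{-1/24}$ (nonzero exactly for $h\in\{1,5,7,11\}$, matching the exponent $-6m^2=-\tfrac1{24}$ at $m=\tfrac1{12}$). For the lift, the principal part is $\sum_m\mathbf t_f(h,-6m^2)q^{-6m^2}$, given by \eqref{negterms} in terms of the principal parts $a_\ell(-n)$ of $f$ at the cusps $\ell$ together with the orbit and width data $\varepsilon_\ell$, $v_\ell(\pm h,-6m^2)$ and the phases $r_{\pm h,\ell}$. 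Here I would transport the expansion of $f$ from $\infty$ to each cusp using $W_d$, read off the finitely many $a_\ell(-n)$, and substitute into \eqref{negterms}, checking that all contributions cancel except for the single term reproducing $-2\tleg{12}{h}\,q^{-1/24}$. \emph{This is the step I expect to be the main obstacle}, as it demands careful bookkeeping of the cusp invariants of the lattice $L$, the normalizations of the $\sigma_\ell$, and the compatibility \eqref{eq:I-gh} of the lift with the Atkin--Lehner action.

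Finally, once both the shadow and the principal part agree, $G$ is a holomorphic vector-valued modular form of weight $3/2$ for $\rho_L$ with trivial principal part, hence bounded at every cusp. Passing to the scalar picture via $F(24\tau)$ on $\Gamma_0(576)$ in the eigenspace determined by the symmetry \eqref{Ftrans} and the Fricke eigenvalue $-1$, a dimension count for this space of holomorphic weight-$\tfrac32$ forms shows it is trivial (and, as a safe fallback, one may instead match a single further Fourier coefficient, since a holomorphic form in this eigenspace with the prescribed vanishing is forced to be $0$). Therefore $G\equiv0$ and $I(\tau,f)=24\,\mathcal F(\tau)$, as claimed.
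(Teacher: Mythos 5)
Your overall strategy---match the shadow and the principal part of $I(\tau,f)$ against those of $24\,\mathcal F$, then kill the holomorphic remainder---is viable, and it is essentially the ``more direct approach'' that the authors themselves flag in the Remark following their proof: compute $N(\tau)$ from Theorem~4.5 of \cite{BF:traces} and match it to the Eichler integral in \eqref{sptmaassform}. The paper's actual argument is genuinely different and never computes $N(\tau)$. It first proves the component symmetry $I_h(\tau,f)=\tleg{12}{h}I_1(\tau,f)$ using $f\big|_0 W_d=\mu(d)f$ together with the equivariance \eqref{eq:I-gh} and the action \eqref{frickeaction} (which also forces $I_h=0$ when $(h,12)\neq 1$); it then computes the principal part of the single component $I_1$ via \eqref{negterms}, finding $-2q^{-1/24}$, so the difference $\mathcal G$ has trivial principal part and its shadow is some constant multiple of $\eta$. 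The Bruinier--Funke pairing (Theorem~3.6 of \cite{BF:geometric}) then forces that constant to vanish, so $\mathcal G$ is holomorphic, and $\mathcal G=0$ because otherwise $G\cdot\eta$ would be a nonzero holomorphic weight-$2$ form on $\SL_2(\Z)$. What the paper's route buys is that the shadow never has to be computed; what yours would buy is independence from the pairing theorem.

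As written, however, your proposal has two concrete gaps. First, both matching computations are only described, and you yourself identify the principal-part bookkeeping at the four cusps as the main obstacle; that is precisely the content that must be supplied (the paper shows how little is actually needed: only ${\bf t}_f(1,-\tfrac1{24})$ survives, computed from $v_\ell(\pm1,-\tfrac1{24})$ and $r_{\pm1,\ell}$, and the claimed dependence of the shadow on the ``constant terms of $f$ at the cusps'' should be checked against the actual form of $N(\tau)$). Second, your final step presupposes that the holomorphic remainder lives in the scalar picture attached to $F(24\tau)$ with the symmetry \eqref{Ftrans} and Fricke eigenvalue $-1$; but $\mathcal G=I(\tau,f)-24\,\mathcal F$ is a priori a general element of $H_{\frac32,\rho_L}$ with twelve independent components, and the reduction to a single scalar $G$ with $\mathcal G=\sum_h\tleg{12}{h}G\,\mathfrak e_h$ requires the identity \eqref{hclaim}, which you never establish. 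Without it, ``a dimension count for this space'' means a dimension count for all of the holomorphic forms of weight $\tfrac32$ for $\rho_L$ (equivalently, coefficient matching up to a Sturm bound), which you have not carried out and which is not obviously trivial. Either prove $I_h=\tleg{12}{h}I_1$ as the paper does, or supply that dimension computation explicitly.
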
	
\begin{proof}   
 For $d\mid 6$ we find that
\begin{equation}\label{wd}
f\big|_0 W_d=\begin{cases} 
	f& \text{ if } d=1,6,\\
	-f& \text{ if } d=2,3.
\end{cases}
\end{equation}
We first claim that 
for $h\in L'/L$ we have 
\begin{equation}\label{hclaim}
I_h\(\tau, f\)=\tleg{12}hI_1\(\tau, f\).
\end{equation}
When $(h, 12)=1$ the claim follows from \eqref{frickeaction}, \eqref{thetalift-h}, \eqref{eq:I-gh} and \eqref{wd}.
If $(h, 12)\neq 1$ then by \eqref{frickeaction}, $h$ is fixed by either $W_2$ or $W_3$.  
In this case,  \eqref{eq:I-gh} and \eqref{wd}
imply that $I_h(\tau, f)=0$, and \eqref{hclaim} follows.

Now let  
\[\mathcal G(\tau)\in H_{\frac32, \rho_L}\] 
denote the difference of the two forms in the statement of the lemma.
By \eqref{sptmaassdef} and \eqref{hclaim}, there is a  function $G$
such that
\[\mathcal G=\sum_{h\in L'/L}\tleg{12}hG \, \mathfrak e_h.\]
Arguing as above, we find that $G$ satisfies the transformation laws described by \eqref{Ftrans}.

Using \eqref{negterms}, we compute the principal part of $I_1(\tau,f)$ as follows. Since
\[
	\beta_{\ell_0} = \frac16, \quad \beta_{\ell_1} = 1, \quad \beta_{\ell_2} = \frac12, \quad  \text{ and } \quad \beta_{\ell_3} =\frac13 ,
\]
we see that ${\bf t}_f(1,-6m^2)=0$ for $m>\frac{1}{12}$. 
A computation shows that
\[
	v_\ell(1,-\tfrac1{24}) = 
	\begin{cases}
		1 & \text{ if } \ell=\ell_0, \\
		0 & \text{ otherwise,}
	\end{cases}
	\quad \text{ and } \quad
	v_\ell(-1,-\tfrac1{24}) = 
	\begin{cases}
		1 & \text{ if } \ell=\ell_1, \\
		0 & \text{ otherwise.}
	\end{cases}
\]
Furthermore, $r_{1,\ell_0} = r_{-1,\ell_1} = 0$, 
so ${\bf t}_f(1,-\frac1{24})=-2$. 
Therefore the principal part of $I_1(\tau, f)$ is given by $-2q^{-1/24}$, which agrees with the principal part of $24F(\tau)$.
It follows from \eqref{sptmaassdef} and \eqref{hclaim} that $\mathcal G(\tau)$ has trivial principal part.

Let $g=\xi_\frac32G$.  Then $g$ is holomorphic on $\H$ and at $\infty$, and we have 
\[
	g(\tau+1)=e\left(\tfrac1{24}\right)g(\tau) \quad \text{ and } \quad g(-1/\tau) = (-i)^\frac12\tau^\frac12 \, g(\tau).
\]
It follows that $g(\tau)$ is a constant multiple of   $\eta(\tau)$.
Theorem~3.6 of \cite{BF:geometric} then implies that $\mathcal G(\tau)$ is  a holomorphic modular form.
It follows that $\mathcal G=0$;
otherwise the product of $G$ with $\eta$ would be a non-zero modular form of weight $2$ for $\SL_2(\Z)$.
\end{proof}

\begin{remark}  A more direct approach to the proof of Proposition~\ref{prop:I=F} is to compute $N(\tau)$ using the formula in Theorem 4.5 of \cite{BF:traces}
and to match it directly to the non-holomorphic part of  \eqref{sptmaassform}.
\end{remark}

\begin{proof}[Proof of Theorem \ref{thm:spt-trace}]
Suppose that $n\geq 1$, and let $\widehat n:=n-\frac1{24}$.
By \eqref{bfthm}, \eqref{posterms}, \eqref{sptmaassform}, and Proposition~\ref{prop:I=F}, we have
\[
	s(n) = \frac1{24} \sum_{X\in \Gamma\backslash L_{1,\widehat n}} |\Gamma_X|^{-1} f(D_X).
\]
Note that for each $X$, we have $D_{-X}=D_X$, so we restrict our attention to the subset
\[
	L_{1,\widehat n}^+ := \left\{\pMatrix{b+\frac{1}{12}}{\frac{c}{6}}{-a}{-b-\frac{1}{12}} : a,b,c\in\Z, \ a>0, \text{ and } q(X) = \widehat n \right\}.
\]
There is a natural bijection between $L_{1,\widehat n}^+$ and $\sQ_{1-24n}^{(1)}$ given by
\[
	X=\pMatrix{b+\frac{1}{12}}{\frac{c}{6}}{-a}{-b-\frac{1}{12}} \longleftrightarrow Q_X := [6a, 12b+1, c].
\]
It is easy to check that the action of $\Gamma$ on $L_{1,\widehat n}$ translates under this bijection to the usual action of $\Gamma$ on $\sQ_{1-24n}^{(1)}$.
Since the stabilizer of $Q$ is trivial for every $Q\in \sQ_{1-24n}^{(1)}$, we have $|\Gamma_X|=1$ for all $X\in \Gamma\backslash L_{1,\widehat n}^+$.
A computation involving \eqref{eq:H-D-iso} shows that $D_X \mapsto \tau_{Q_X}$ under the isomorphism $\D\cong \H$. Thus we have
\[
	s(n) = \frac{1}{12}\sum_{Q\in \Gamma\backslash \sQ_{1-24n}^{(1)}} f(\tau_Q).
\]
The theorem follows from \eqref{algpart} and \eqref{eq:def-s}.
\end{proof}

\section{Poincar\'e series and the function $f(\tau)$}

In this section we construct the modular function $f(\tau)$ in terms of weak Maass-Poincar\'e series. To this end, we construct an auxiliary function $f(\tau,s)$, defined for $\re(s)>1$ and compute its Fourier expansion to obtain an analytic continuation of $f(\tau,s)$ to $\re(s)>\frac34$. 
We then show that  $f(\tau,1)=f(\tau)$.

Recall \eqref{eq:gamma_def} and 
  write 
  \[\tau=x+iy,\qquad s=\sigma+it.\]
 Letting $\Gamma_\infty=\{\pmatrix 1*01\}$ denote the stabilizer of $\infty$, we define
\[
	F(\tau,s) := \sum_{\gamma\in \Gamma_\infty\backslash\Gamma} \phi_s(\im\gamma\tau)e(-\re\gamma\tau),
\]
where
\begin{equation} \label{eq:def-phi}
	\phi_s(y) := 2\pi\sqrt{y} \, I_{s-\frac12}(2\pi y).
\end{equation}
Since $\phi_s(y) \ll y^\sigma$ as $y\to 0$,
we have
\[
	F(\tau,s) \ll y^\sigma \sum_{\pmatrix **cd\in\Gamma_\infty\backslash\Gamma} |c\tau+d|^{-2\sigma},
\]
so $F(\tau,s)$ converges normally for $\sigma>1$. 
A computation involving (13.14.1) of \cite{NIST:DLMF} shows that
\begin{equation} \label{eq:delta-0-F}
	\Delta_0 F(\tau,s) = s(1-s)F(\tau,s).
\end{equation}

Let $\mu$ denote the M\"obius function, and define
\[
	f(\tau,s) := \sum_{r\mid 6} \mu(r) F(W_r \tau,s).
\]
The following proposition gives the Fourier expansion of $f(\tau,s)$ in terms of
the ordinary Kloosterman sum
\begin{equation} \label{eq:ordinary-kloo}
	k(m,n;c) := \sum_{\substack{d\bmod c \\ (c,d)=1}} e\left(\frac{m\bar d+nd}{c}\right)
\end{equation}
and the $I$, $J$, and $K$-Bessel functions
(here $\bar d$ is the multiplicative inverse of $d$ modulo $c$).

\begin{proposition} \label{prop:f-tau-s-expansion}
For $\sigma>1$ we have
\[
	f(\tau,s) = 2\pi\sqrt{y}\,I_{s-\frac12}(2\pi y)e(-x) + a_s(0)y^{1-s} + 2\sqrt{y}\,\sum_{n\neq 0}a_s(n) K_{s-\frac12}(2\pi|n|y)e(nx),
\]
where
\begin{align*}
	a_s(0) &= \frac{2\pi^{s+1}}{(s-\frac12)\Gamma(s)}\sum_{r\mid 6}\mu(r)\sum_{\substack{0<c\equiv 0(6/r) \\ (c,r)=1}} \frac{k(-\bar r,0;c)}{(c\sqrt{r})^{2s}},
	\\
	a_s(n) &=2\pi\sum_{r\mid 6}\mu(r)\sum_{\substack{0<c\equiv 0(6/r) \\ (c,r)=1}} \frac{k(-\bar r,n;c)}{c\sqrt{r}} \times
	\begin{dcases}
		I_{2s-1}\pfrac{4\pi\sqrt{n}}{c\sqrt{r}} & \text{ if }n>0, \\
		J_{2s-1}\pfrac{4\pi\sqrt{|n|}}{c\sqrt{r}} & \text{ if }n<0.
	\end{dcases}
\end{align*}
\end{proposition}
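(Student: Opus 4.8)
The plan is to compute the Fourier expansion by unfolding each Atkin--Lehner translate $F(W_r\tau,s)$ separately and then reassembling the pieces through the M\"obius sum that defines $f(\tau,s)$. Throughout I work in the range $\sigma>1$, where the defining series converges normally (as noted just before \eqref{eq:delta-0-F}); this absolute convergence justifies every interchange of summation with integration and the Poisson summation used below. The first step is to isolate the main term. Writing a coset representative of $\Gamma_\infty\backslash\Gamma W_r$ in the $\SL_2(\R)$-normalized form $\tfrac{1}{\sqrt r}\pMatrix{A}{B}{C}{D}$ with $AD-BC=r$, the lower-left entry vanishes only for $r=1$ and $\gamma\in\Gamma_\infty$, and this single coset contributes precisely $\phi_s(y)e(-x)=2\pi\sqrt y\,I_{s-\frac12}(2\pi y)e(-x)$, the leading term in the stated expansion (its coefficient is $\mu(1)=1$).

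Next I would organize the remaining cosets, all of which have nonzero lower-left entry, according to the value $c:=C/\sqrt r$. A short computation with \eqref{eq:def-W-d} shows that $c$ is a positive integer with $c\equiv0\pmod{6/r}$ and $(c,r)=1$, and that, after normalizing, the lower-left entry of the representative is exactly $c$; the lower-right residue then runs over classes coprime to $c$, while the residual right action of $\Gamma_\infty$ translates $\tau\mapsto\tau+m$. For fixed $c$ I carry out the Fourier expansion in $x$ by summing over these integer translates and unfolding via Poisson summation. Using $\im\gamma\tau=y/\bigl(c^2|\tau+D/(c\sqrt r)|^2\bigr)$ together with the expansion of $\re\gamma\tau$, the $n$th coefficient of the modulus-$c$ contribution reduces, up to an elementary phase and a real shift, to the classical integral
\[
\int_{-\infty}^{\infty}\phi_s\!\left(\frac{y}{c^2(u^2+y^2)}\right) e\!\left(\frac{u}{c^2(u^2+y^2)}\right)e(-nu)\,du .
\]

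The evaluation of this integral is standard (cf.\ the Bessel integrals in \cite{NIST:DLMF}). The eigenvalue equation \eqref{eq:delta-0-F} forces the $y$-dependence to be the two admissible Whittaker shapes, namely $y^{1-s}$ when $n=0$ and $2\sqrt y\,K_{s-\frac12}(2\pi|n|y)$ when $n\neq0$, while the dependence on $c$ comes out as a Bessel function of argument $\tfrac{4\pi\sqrt{|n|}}{c\sqrt r}$. The sign of the seed index $m=-1$ (from the factor $e(-\re\gamma\tau)$) fixes the dichotomy: since $m<0$ one obtains $I_{2s-1}$ for $n>0$ and $J_{2s-1}$ for $n<0$, exactly as recorded, and the $n=0$ case produces the constant $\tfrac{2\pi^{s+1}}{(s-\frac12)\Gamma(s)}$ together with the power $(c\sqrt r)^{-2s}$.

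Finally I would collect the arithmetic. Summing the $x$-independent phase $e(-A/(c\sqrt r))$, weighted by the $n$-translate phase, over the residues coprime to $c$ assembles the Kloosterman sum \eqref{eq:ordinary-kloo}; tracking the top-left entry of the normalized representative modulo $c$ identifies the numerator residue as $-\bar r$, giving $k(-\bar r,n;c)$ with the normalization $c\sqrt r$ (respectively $(c\sqrt r)^{2s}$ when $n=0$). Reinstating $\sum_{r\mid6}\mu(r)$ then yields the stated formulas for $a_s(0)$ and $a_s(n)$. I expect the main obstacle to be precisely this Atkin--Lehner bookkeeping: one must verify for each $r\mid6$ which moduli $c$ occur (the conditions $6/r\mid c$ and $(c,r)=1$), confirm that the induced residue in the Kloosterman sum is $-\bar r$ rather than merely $-1$ (this is where the coprimality $(c,r)=1$ and the inverse $\bar r \bmod c$ enter), and pin down the $\sqrt r$ normalization, all of which rest on careful use of \eqref{eq:def-W-d} and the integrality of $\sqrt r\,\gamma W_r$.
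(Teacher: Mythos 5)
Your proposal is correct and follows essentially the same route as the paper: isolate the $\Gamma_\infty$-coset giving the seed term, parametrize $\Gamma_\infty\backslash\Gamma W_r$ by normalized bottom rows with $(6/r)\mid c$ and $(c,r)=1$, unfold the $x$-integral over the $\Gamma_\infty$-translates, assemble $k(-\bar r,n;c)$ from the congruence $a\equiv\overline{rd}\pmod c$, and evaluate the resulting Bessel integral (which the paper does via Lemma~5.5 of \cite{Hejhal} rather than by the eigenvalue-equation shape argument you sketch; note also that the unfolding step is a regrouping of unit intervals into $\int_{-\infty}^{\infty}$, not Poisson summation). The only substantive difference is that you assert the integral evaluation and the precise constants rather than computing them, but the identification of the $I_{2s-1}$/$J_{2s-1}$ dichotomy from the negative seed index and of the numerator residue $-\bar r$ are both right.
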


\begin{proof}
The function $f(\tau,s)-\phi_s(y)e(-x)$ has at most polynomial growth as $y\to\infty$.
For $r\mid 6$, $r\neq 1$, 
a complete set of representatives for $\Gamma_\infty \backslash \Gamma W_r$ is given by
\[
	\left\{ \pMatrix{ra}{*}{rc}{rd} : \ c>0, \ \gcd(c,rd)=1, \ (6/r) \mid c,\  a\in\{1, \dots, c-1\},\  a\equiv\bar{rd}\pmod c \right\}.
\]
So
we have the Fourier expansion
\[
	f(\tau,s) = \phi_s(y)e(-x) + \sum_{n\in \Z} \sum_{r\mid 6} \mu(r) A_r(n,y,s) e(nx),
\]
where
\begin{align*}
	A_r(n,y,s)
	&= \sum_{\substack{\gamma \in \Gamma_\infty \backslash \Gamma W_r \\ c(\gamma)>0}} \int_0^1 \phi_s(\im\gamma \tau) e(-\re\gamma\tau) e(-nx) \, dx.
\end{align*}

For $\gamma=\pmatrix{ra}{b}{rc}{rd}\in \Gamma_\infty \backslash \Gamma W_r$ with $c(\gamma)>0$ we write
\[
	\gamma\tau=\frac{ra \, \tau+b}{rc \, \tau+rd} = \frac{a}{c} - \frac{1}{rc^2(\tau+d/c)}
\]
and  make the change of variable $x\to x-d/c$ to obtain
\begin{align*}
	A_r(n,y,s) &= \sum_{\substack{0<c\equiv 0(6/r) \\ (c,rd)=1}} e\left(\frac{-a+nd}{c}\right) \int_{d/c}^{1+d/c} \phi_s\left(\frac{y}{rc^2|\tau|^2}\right) e\left(\frac{x}{rc^2|\tau|^2}-nx\right) \, dx.
\end{align*}
Since $a\equiv \bar{rd}\pmod{c}$ we have
\[
	\sum_{\substack{d\bmod c \\ (d,c)=1}} e\pfrac{-a+nd}{c} = k(-\bar{r},n;c),
\]
so that
\begin{align*}
	A_r(n,y,s) &=2\pi\sum_{\substack{0<c\equiv 0(6/r) \\ (r,c)=1}} \frac{k(-\bar r,n;c)}{c\sqrt{r}} y^{\frac12}\int_{-\infty}^\infty |\tau|^{-1} \, I_{s-\frac12}\left(\frac{2\pi y}{rc^2|\tau|^2}\right) e\left(\frac{x}{rc^2|\tau|^2} - nx\right) \,dx.
\end{align*}
Let $I$ denote the integral above. 
We make the substitution $x=yu$ and set $A=\frac{1}{rc^2y}$ and $B=-ny$, so that
\[
	I=\int_{-\infty}^\infty (u^2+1)^{-\frac12} I_{s-\frac12}\pfrac{2\pi A}{u^2+1} e\left(\frac{Au}{u^2+1} + Bu\right) \, du.
\]
Using Lemma 5.5 on page 357 and (xiv) and (xv) on page 345 of \cite{Hejhal}, and the fact that
\[
	2^{1-2s}\sqrt{\pi}\, \frac{\Gamma(2s)}{\Gamma(s+\frac12)\Gamma(s)} = 1,
\] 
we find that
\[
	I=
	\begin{dcases}
		2 K_{s-\frac12}(2\pi B) J_{2s-1}\left(4\pi\sqrt{AB}\right) &\text{ if }B>0, \\
		\frac{\pi^s A^{s-\frac12}}{(s-\frac12)\Gamma(s)} &\text{ if }B=0, \\
		2 K_{s-\frac12}(2\pi |B|) I_{2s-1}\left(4\pi\sqrt{A|B|}\right) &\text{ if }B<0.
	\end{dcases}
\]
The proposition follows.
\end{proof}

The function $f(\tau,s)$ has an analytic continuation to $\sigma>\frac34$, as we now show.
Suppose that $\frac34<\sigma_0<\frac32$, and fix $\epsilon_0$ with $0<\epsilon_0< 2\sigma_0-\frac32$.
We will show that the Fourier expansion in Proposition~5 converges absolutely and  uniformly for 
$s$ in the region $R$ defined by $\sigma_0\leq \sigma\leq \frac32$, $|t|\leq T$
(the estimates below are for $s\in R$).
Using the Weil bound  \cite{Weil:exponential} for Kloosterman sums we have
\[
	k(a,b;c) \ll \gcd(a,b,c)^{\frac12} c^{\frac12+\ep_0}, 
\]
from which 
\[
	a_s(0) \ll \sum_{c>0} c^{-2\sigma_0+\frac12+\ep_0} \ll 1.
\]
By (10.40.2) of \cite{NIST:DLMF} we have 
\begin{equation}\label{KBesselsize}
	\sqrt{y} \, K_{s-\frac12}(2\pi|n|y) \ll |n|^{-\frac12}e^{-2\pi|n|y} \quad \text{as $n\to\infty$}.
\end{equation}
From  (10.40.1) and (10.30.1) of \cite{NIST:DLMF} we have
\begin{gather*}
	I_{2s-1}(x) \ll \frac{e^x}{\sqrt{x}} \quad \text{as }x\to\infty, \\
	I_{2s-1}(x) \ll x^{2s-1} \quad \text{as }x\to 0.
\end{gather*}
Suppose that $n>0$. Taking absolute values in the series defining $a_s(n)$, we find that
\begin{equation}\label{assize}
a_s(n)  \ll   n^{-\frac14}\sum_{c<\sqrt n} c^{\ep_0} \, e^{\frac{4\pi\sqrt{n}}{c}} + n \sum_{c\geq \sqrt n}  c^{-2\sigma_0+\frac12+\ep_0} 
 \ll e^{4\pi\sqrt n}n^{\frac14+\ep_0}+n \ll e^{6\pi\sqrt n}.
\end{equation}
From (10.7.8) and (10.7.3) of \cite{NIST:DLMF} we have
\begin{gather*}
	J_{2s-1}(x) \ll \frac{1}{\sqrt x} \quad \text{as }x\to\infty, \\
	J_{2s-1}(x) \ll x^{2s-1} \quad \text{as }x\to 0.
\end{gather*}
Arguing as above we find that for $n<0$ we have $a_s(n)\ll n$.
With \eqref{assize} and  \eqref{KBesselsize}, this shows that the Fourier expansion converges absolutely and uniformly for $s\in R$.
This provides the analytic continuation of  $f(\tau,s)$ to $\sigma>\frac34$.

Since 
\[
	2\sqrt{y}\,K_{\frac12}(2\pi |n|y)= |n|^{-\frac12}e^{-2\pi |n|y} \quad \text{ and } \quad 2\pi\sqrt{y}\,I_{\frac12}(2\pi y)=2\sinh(2\pi y),
\]
the Fourier expansion of $f(\tau,1)$ is
\begin{equation}\label{fourierftau}
	f(\tau,1) = e(-\tau) + a_1(0) + \sum_{n>0}\frac{a_1(n)}{\sqrt{n}}e(n\tau) - e(-\bar{\tau}) + \sum_{n<0}\frac{a_1(n)}{\sqrt{|n|}}e(n\bar{\tau}).
\end{equation}
Using \eqref{eq:delta-0-F} and \eqref{fourierftau} we find that $\xi_0 f(\tau,1)$ is 
a cusp form of weight $2$ on $\Gamma_0(6)$, so it equals $0$.
Therefore $f(\tau,1)$ is holomorphic on $\H$. Since the principal parts of $f(\tau,1)$ and $f(\tau)$ are equal, we conclude that
\begin{equation} \label{eq:f-tau-1=f-tau}
	f(\tau,1) = f(\tau),
\end{equation}
as desired.

\section{Proof of Theorem \ref{thm:spt-exact}}

By Theorem \ref{thm:spt-trace} and equations \eqref{eq:f-tau-1=f-tau}, \eqref{eq:def-s}, and \eqref{eq:root-compatible}  we have
\begin{align*}
	s(n) &= \frac{1}{12}\sum_{Q\in \Gamma\backslash \sQ_{1-24n}^{(1)}} f(\tau_Q) \\
	&= \lim_{s\to 1^+} \frac{1}{12} \sum_{Q\in \Gamma\backslash \sQ_{1-24n}^{(1)}} \sum_{d\mid 6} \sum_{\gamma \in \Gamma_\infty \backslash \Gamma} \mu(d) \phi_s(\im \tau_{\gamma W_d Q})e(-\re\tau_{\gamma W_d Q}).
\end{align*}
By \eqref{eq:W-d-r-r'} and \eqref{eq:Qn-decomp-W-d} the map $\left(\gamma,d,Q\right) \mapsto \gamma W_d\,Q$ is a bijection 
\begin{equation} \label{eq:bij-neg-disc}
	\Gamma_\infty\backslash\Gamma \times \{1,2,3,6\} \times \Gamma\backslash \sQ^{(1)}_{1-24n} \longleftrightarrow \Gamma_\infty \backslash \sQ_{1-24n}.
\end{equation}
If $Q\in \sQ_{1-24n}^{(1)}$ and $Q'=W_dQ=[a,b,*]$ then $\mu(d)=\pfrac{12}b$ by \eqref{eq:W-d-r-r'} and \eqref{eq:r'-r-cases}. Thus we have
\begin{align*}
	s(n) = \lim_{s\to 1^+} \frac{1}{12} \sum_{\substack{Q\in\Gamma_\infty \backslash \sQ_{1-24n} \\ Q=[a,b,*]}} \pfrac{12}{b} \phi_s\left( \frac{\sqrt{24n-1}}{2a}\right)e\left(\frac{b}{2a}\right).
\end{align*}
Since $\pmatrix 1k01 [a,b,*]=[a,b-2ka,*]$, there is a bijection
\begin{equation*} \label{eq:sQ-infty-bijection}
	\Gamma_\infty\backslash \sQ_{1-24n} \longleftrightarrow \left\{(a,b) : a>0, \  \ 6\mid a, \ \  0\leq b<2a, \ \ b^2\equiv1-24n\pmod{4a}\right\},
\end{equation*}
which, together with \eqref{eq:def-phi}, gives
\begin{align*}
	s(n) &= \lim_{s\to 1^+} \frac{\pi}{6\sqrt{2}} (24n-1)^\frac14 \sum_{6\mid a>0} a^{-\frac12} I_{s-\frac12} \left( \frac{\pi\sqrt{24n-1}}{a} \right) \sum_{\substack{b\bmod{2a} \\ b^2\equiv 1-24n (4a)}} \pfrac{12}{b} e\pfrac{b}{2a}.
\end{align*}
Writing $a=6c$, we see that the inner sum is equal to
\[
	\frac{1}{2} \sum_{\substack{b\bmod 24c \\ b^2\equiv 1-24n(24c)}} \pfrac{12}{b} e\pfrac{-b}{12c}.
\]
By Proposition 6 of \cite{Andersen:singular} (see also \cite{Whiteman}), this equals
\[
	\frac{2\sqrt{3}}{\sqrt{c}} A_c(n).
\]
We conclude that
\begin{equation} \label{eq:s-exact-limit}
	s(n) = \lim_{s\to 1^+} \frac{\pi}{6}(24n-1)^\frac14 \sum_{c=1}^\infty \frac{A_c(n)}{c} I_{s-\frac12} \left( \frac{\pi\sqrt{24n-1}}{6c} \right).
\end{equation}
To finish the proof of Theorem~ \ref{thm:spt-exact} we need 
to interchange  the limit and the sum in  \eqref{eq:s-exact-limit}.
To justify this, we apply partial summation and
Theorem~\ref{Athm}.

Set $a:=\frac{\pi\sqrt{24n-1}}{6}$, suppose that $s\in [1,2]$, and define   
\begin{equation}\label{bfadef}
{\bf A}_n(x):=\sum_{c\leq x} \frac{A_c(n)}{c}.
\end{equation}
By Theorem~\ref{Athm} we have ${\bf A}_n(x)\ll_{\epsilon, n}x^{\frac16+\ep}$ for any $\ep>0$.
Partial summation, together with \eqref{festimate} and  Lemma \ref{lem:bessel} below, gives
\[\begin{aligned}
\sum_{c>N} \frac{A_c(n)}{c} I_{s-\frac12}\pfrac ac&=\lim_{x\to\infty}{\bf A}_n(x)I_{s-\frac12}\pfrac ax-{\bf A}_n(N)I_{s-\frac12}\pfrac aN-\int_N^\infty {\bf A}_n(t)
\(I_{s-\frac12}(a/t)\)'\, dt\\
&\ll_aN^{\frac23-s+\epsilon}+\int_N^\infty t^{-\frac13-s+\epsilon}\, dt
\ll_aN^{-\frac13+\epsilon}.
\end{aligned}\]
It follows that the series \eqref{eq:s-exact-limit} converges uniformly for $s\in [1,2]$. Interchanging 
the limit and the sum gives Theorem \ref{thm:spt-exact}.
It remains to prove the following straightforward lemma.

\begin{lemma}  \label{lem:bessel}
Suppose that 
$a>0$ is fixed and that $\frac12\leq\nu\leq \frac32$.
Then 
\[\left|\(I_\nu(a/x)\)'\right| \ll_{a}{x^{-\nu-1}} \ \ \ \text{as $x\to\infty$}.\]
\end{lemma}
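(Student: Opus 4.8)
The plan is to estimate the derivative of $I_\nu(a/x)$ by combining the chain rule with a standard recurrence and the small-argument asymptotics for the $I$-Bessel function. First I would note that as $x\to\infty$ the argument $u:=a/x\to 0$, so the relevant regime is the small-argument behavior of the derivative $I_\nu'$. Differentiating $I_\nu(a/x)$ in $x$ gives
\[
	\bigl(I_\nu(a/x)\bigr)' = -\frac{a}{x^2}\, I_\nu'(a/x),
\]
so it suffices to show that $I_\nu'(u) \ll_\nu u^{\nu-1}$ as $u\to 0$, uniformly for $\nu\in[\tfrac12,\tfrac32]$; substituting $u=a/x$ then produces $x^{2}\cdot x^{-(\nu-1)}\cdot x^{-2}=x^{-\nu-1}$ up to the constant $a^{\nu}$, which is the claimed bound.

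For the derivative I would invoke the recurrence relation $I_\nu'(u) = \tfrac12\bigl(I_{\nu-1}(u)+I_{\nu+1}(u)\bigr)$ (equation (10.29.1) of \cite{NIST:DLMF}), which reduces the problem to bounding $I_{\nu\pm 1}$ near $u=0$. The small-argument asymptotic (10.30.1) of \cite{NIST:DLMF}, namely $I_\mu(u)\sim \tfrac{1}{\Gamma(\mu+1)}(u/2)^\mu$ as $u\to 0$, gives $I_{\nu-1}(u)\ll u^{\nu-1}$ and $I_{\nu+1}(u)\ll u^{\nu+1}$. Since $\nu-1\le \nu+1$, the dominant term as $u\to 0$ is $I_{\nu-1}(u)\ll u^{\nu-1}$, and hence $I_\nu'(u)\ll u^{\nu-1}$, as needed. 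The range $\nu\in[\tfrac12,\tfrac32]$ keeps $\nu-1\in[-\tfrac12,\tfrac12]$ bounded, so the implied constant can be taken uniform in $\nu$.

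There is essentially no serious obstacle here; the lemma is genuinely straightforward, as the authors note. The only point requiring minor care is the case $\nu-1<0$ (which occurs for $\nu<1$), where $I_{\nu-1}(u)$ blows up like $u^{\nu-1}$ as $u\to 0$; but this is precisely the bound we want, so it causes no difficulty, and it is in fact this term that governs the stated exponent. One should also confirm that the implied constants depend only on $a$ (through the factor $a^\nu$, bounded since $\nu$ and $a$ are bounded) and not on $x$, which is immediate from the explicit asymptotics.
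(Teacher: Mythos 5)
Your proposal is correct and follows essentially the same route as the paper: chain rule, the recurrence $I_\nu'(u)=\tfrac12\bigl(I_{\nu-1}(u)+I_{\nu+1}(u)\bigr)$ from (10.29.1) of \cite{NIST:DLMF}, and the small-argument asymptotic (10.30.1) to get the bound $\ll_a x^{-\nu-1}$. The only cosmetic difference is that the paper collapses the two recurrence terms into $I_{\nu-1}$ using monotonicity of $I_\nu(x)$ in $\nu$, whereas you bound each term separately and observe that $I_{\nu-1}$ dominates; both are fine.
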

\begin{proof}

By (10.29.1) of \cite{NIST:DLMF} we find that 
\[
	I_\nu'(x)=\tfrac12\left(I_{\nu-1}(x)+I_{\nu+1}(x)\right).
\]
For fixed $x$, the function $I_\nu(x)$ is decreasing as a function of $\nu$.  
Therefore
\begin{equation}\label{monotonebessel}
\left|\(I_\nu(a/x)\)'\right| =\frac{a}{2x^2}\left(I_{\nu-1}(a/x)+I_{\nu+1}(a/x)\right)\leq\frac{a}{x^2}I_{\nu-1}(a/x).
\end{equation}
From (10.30.1) of \cite{NIST:DLMF} we have 
\begin{equation}\label{festimate}
I_\nu(a/x)\ll_a x^{-\nu}\ \ \text{as $x\to\infty$ \ \ for \ \ $\nu\in [1/2, 3/2]$}.
\end{equation}
The lemma follows from \eqref{monotonebessel} and \eqref{festimate}.
\end{proof}

\section{Sums of Kloosterman sums}\label{kloostersection}
In this section we will consider  sums of  Kloosterman sums $S(m, n, c,\chi)$ associated to a multiplier in weight $k$, which were
 studied when $m, n>0$ by Goldfeld and Sarnak \cite{GS:kloosterman}.
 Work of  Folsom-Ono \cite{FO:duality} and  Pribitkin \cite{Pribitkin:kloosterman} applies to the case of general  $m$ and $n$.
 For completeness we record a general asympotic formula here,
referring the reader to \cite{GS:kloosterman} for details. 

Let  $\Gamma$ be a finite-index subgroup of $\SL_2(\Z)$ which contains $-I$. Suppose that $k\in \R$ and   that $\chi$ is a multiplier on $\Gamma$ for  the weight $k$.
Suppose that $q$ is the smallest positive integer
for which $\pmatrix 1q01\in \Gamma$
and define $\alpha\in [0, 1)$ by 
\[\chi\left(\!\pMatrix1q01\!\right)=e(-\alpha).\]
For simplicity, we write
\[\widetilde n:=\frac{n-\alpha}q.\]
For $c>0$,  the generalized Kloosterman sum is given by 
\[S(m, n, c,\chi):=\sum_{\substack{0\leq\, a,d\,<qc \\ \gamma=\pmatrix abcd\in\Gamma}}\overline{\chi(\gamma)} \, e\(\frac{\widetilde m a+\widetilde n d}{c}\), \]
and Selberg's Kloosterman zeta function is defined as
\[
	Z_{m,n}(s,\chi) := \sum_{c>0}\frac{S(m,n,c,\chi)}{c^{2s}}.
\]

The space 
$L^2\(\Gamma\backslash \H, \chi, k\)$ consists of functions $f: \H\to\C$ such that
\[f\(\gamma \tau\)=\chi(\gamma)\(\frac{c\tau+d}{|c\tau+d|}\)^kf(\tau) \quad \text{ for all } \gamma\in \Gamma\]
and $\|f\|<\infty$, where
\[
	\|f\|^2 := \iint_{\Gamma\backslash\H}|f(\tau)|^2\,\frac{dxdy}{y^2}.
\]
The operator
\[\widetilde \Delta_{k}:=y^2\(\frac{\partial^2}{\partial x^2}+\frac{\partial^2}{\partial y^2}\)-iky\frac\partial{\partial x}\]
(which is not  the operator $\Delta_k$ in \S \ref{thetaliftsec}) 
has a self-adjoint extension to $L^2\(\Gamma\backslash \H, \chi, k\)$ with real spectrum.
The asymptotic formula of \cite{GS:kloosterman} depends on the discrete spectrum, which 
we denote by 
\[\lambda_0\leq\lambda_1\leq  \lambda_2\leq\dots.\]
For each $j$  with $\lambda_j<\frac14$ let $u_j$ be 
the normalized Maass cusp form corresponding to $\lambda_j$
and define 
$s_j\in \(\frac12, 1\)$ by
\[\lambda_j=s_j(1-s_j).\]
We have the expansion
\begin{equation}\label{uj}
u_j(\tau)=\sum_{m=-\infty}^\infty
\widehat{u}_j(m,y) e(\widetilde m x),
\end{equation}
where
\begin{equation} \label{ujhat}
	\widehat{u}_j(m,y) = 
	\begin{dcases}
		\rho_j(m) W_{\frac k2 \sgn(\widetilde m),s_j-\frac 12}(4\pi|\widetilde m|y) e(\widetilde m x) & \text{ if } \widetilde m\neq 0, \\
		\rho_j(0) y^{s_j} + \rho'_j(0) y^{1-s_j} &\text{ if } \widetilde m=0.
	\end{dcases}
\end{equation}
Define
\[
	\beta := \limsup_{c\to\infty} \frac{\log|S(m,n,c,\chi)|}{\log c}.
\]
With this notation we have the following
\begin{proposition}\label{gsplus}
Suppose that $m>0$ and that $n\in \Z$.  For any $\epsilon>0$ we have
\begin{equation*}
	\sum_{c\leq x} \frac{S(m,n,c,\chi)}{c}=\sum_{\frac12<s_j<1}\tau_j(m,n)\frac{x^{2s_j-1}}{2s_j-1}+O\(x^{\frac{\beta}3+\epsilon}\), 
	\end{equation*}
where  the sum runs over those $j$ with $\lambda_j<\frac14$ as above and 
\begin{equation}\label{taudef}
\tau_j(m, n)=2q^2i^k\,\overline{\rho_j(m)}\rho_j(n)\pi^{1-2s_j}(4\widetilde m|\widetilde n|)^{1-s_j}
\frac{\Gamma\(s_j+\sgn(\widetilde n)\frac k2\)\Gamma\(2s_j-1\)}{\Gamma\(s_j-\frac k2\)}.
\end{equation}
The implied constant depends on $k$, $\chi$, $m$, $n$, $\epsilon$, and $\Gamma$.
\end{proposition}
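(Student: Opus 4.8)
The plan is to realize $Z_{m,n}(s,\chi)$ as a Fourier coefficient of a weight-$k$ Poincar\'e series, use the spectral decomposition of $\widetilde\Delta_k$ to continue it meromorphically and locate its poles, and then convert this analytic information into the stated asymptotic by a Tauberian argument in the spirit of \cite{GS:kloosterman}. Since $m>0$ we have $\widetilde m>0$, and I let $\varphi_s(\tau):=W_{\frac k2,\,s-\frac12}(4\pi\widetilde m y)\,e(\widetilde m x)$ denote the Whittaker eigenfunction of $\widetilde\Delta_k$ with eigenvalue $s(1-s)$ that decays as $y\to\infty$. First I would form, in the standard weight-$k$ normalization attached to $\chi$, the Poincar\'e series
\[
	U_m(\tau,s):=\sum_{\gamma\in\Gamma_\infty\backslash\Gamma}\overline{\chi(\gamma)}\(\frac{c\tau+d}{|c\tau+d|}\)^{-k}\varphi_s(\gamma\tau),
\]
which converges absolutely for $\re s>1$ and satisfies $\widetilde\Delta_k U_m(\cdot,s)=s(1-s)U_m(\cdot,s)$. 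Unfolding the sum against $e(-\widetilde n x)$ and evaluating the resulting Bessel/Whittaker integral shows that the $n$-th Fourier coefficient of $U_m(\tau,s)$ is, up to an explicit $y$-Whittaker factor and the contribution of the identity coset, a gamma factor times $Z_{m,n}(s,\chi)$. This exhibits Selberg's zeta function as essentially a Fourier coefficient of $U_m$.

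Next I would expand $U_m(\cdot,s)$ along the discrete and continuous spectrum of $\widetilde\Delta_k$ on $L^2(\Gamma\backslash\H,\chi,k)$. Unfolding the inner product $\langle U_m(\cdot,s),u_j\rangle$ produces $\overline{\rho_j(m)}$ times a Mellin-type integral divided by $s(1-s)-\lambda_j=(s-s_j)(s-(1-s_j))$, so in $\re s>\frac12$ the only poles come from the cusp forms with $\lambda_j<\frac14$, located at $s=s_j\in(\frac12,1)$; the Eisenstein part contributes the boundary behavior at $\re s=\frac12$ but no poles in $\re s>\frac12$. Matching the two expressions for the $n$-th Fourier coefficient then yields the meromorphic continuation of $Z_{m,n}(s,\chi)$ to $\re s>\frac12$, polynomial growth in vertical strips, and the residues
\[
	\Res_{s=s_j}Z_{m,n}(s,\chi)=\tfrac12\,\tau_j(m,n),
\]
with $\tau_j(m,n)$ as in \eqref{taudef}. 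The gamma quotient in \eqref{taudef} is precisely the combination of normalizing constants coming from $\varphi_s$, from the Whittaker expansion \eqref{uj}--\eqref{ujhat} of $u_j$, and from the unfolding integral; the factor $\sgn(\widetilde n)$ records the sign of $\widetilde n$, and this bookkeeping is exactly what extends the computation of \cite{GS:kloosterman} from $n>0$ to arbitrary $n\in\Z$.

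Finally I would extract the asymptotic by a Tauberian argument. Writing $a_c:=S(m,n,c,\chi)/c$, the definition of $Z_{m,n}(s,\chi)$ gives $\sum_c a_c\,c^{-w}=Z_{m,n}(\tfrac{w+1}2,\chi)$, which converges absolutely for $\re w>\beta$ by the definition of $\beta$. Applying a truncated Perron formula to $\sum_{c\leq x}a_c$ and shifting the contour to just right of $\re w=0$ (equivalently $\re s=\frac12$), I would collect the residues at the points $w_j=2s_j-1$; after the substitution $w=2s-1$ these produce exactly $\sum_{1/2<s_j<1}\tau_j(m,n)\,x^{2s_j-1}/(2s_j-1)$. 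The remaining vertical integral and the Perron truncation error are then estimated using the polynomial growth of $Z_{m,n}$ on $\re s=\frac12+\epsilon$ and the trivial input bound $|S(m,n,c,\chi)|\ll c^{\beta+\epsilon}$; choosing the truncation height to balance these two contributions produces the error $O(x^{\beta/3+\epsilon})$.

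The main obstacle is this final Tauberian estimate, not the spectral identity. Because $\re s=\frac12$ is the edge of the continuous spectrum, $Z_{m,n}(s,\chi)$ does not continue past this line, so the contour cannot be pushed far enough to make the error $O(x^\epsilon)$; the exponent $\beta/3$ is forced by optimizing the truncation height against the growth of $Z_{m,n}$ near $\re s=\frac12$ and the trivial exponent $\beta$, exactly as in \cite{GS:kloosterman}. A secondary technical point is to justify the spectral expansion and the unfolding for the non-$L^2$ series $U_m(\cdot,s)$ in weight $k$ with multiplier $\chi$; this is handled by regularizing through the resolvent $(\widetilde\Delta_k-s(1-s))^{-1}$ and invoking the meromorphic continuation of its kernel.
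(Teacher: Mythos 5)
Your proposal is correct and follows essentially the same route as the paper: both implement the Goldfeld--Sarnak method, realizing $Z_{m,n}(s,\chi)$ via Poincar\'e series (the paper through the inner product $\langle P_m(\tau,s,\chi,k),\overline{P_{1-n}(\tau,s+2,\overline\chi,-k)}\rangle$, you through the $n$-th Fourier coefficient of a single Poincar\'e series, which is the same unfolding computation), locating the poles at the exceptional points $s_j$ with residues $\tfrac12\tau_j(m,n)$, and concluding with the contour-shift/Perron argument of Section~3 of \cite{GS:kloosterman} to obtain the error $O(x^{\beta/3+\epsilon})$. Your residue formula and the $\sgn(\widetilde n)$ bookkeeping needed to extend the computation from $n>0$ to all $n\in\Z$ agree with the paper's.
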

When $n>0$ this is Theorem~2 of \cite{GS:kloosterman}, but the constants  in (3.2) of \cite{GS:kloosterman} differ  from those in \eqref{taudef}.
Figure \ref{kloosterfig} in Section~\ref{kloostereta} gives an example which supports the accuracy of  \eqref{taudef}.
The existence of such a formula is implicit in  \cite{FO:duality}.

For the case when $n\leq 0$ 
we argue as in Lemma~2 of \cite{GS:kloosterman}, relating $Z_{m,n}(s,\chi)$ to the inner product of two Poincar\'e series. We compute
\begin{equation*}
\langle P_m\(\tau, s, \chi, k\), \overline{P_{1-n}\(\tau, s+2, \overline{\chi}, -k\)}\rangle=
\frac{(-i)^k}{4^{s+1}\pi\widetilde n^2}\cdot
\frac{\Gamma\(2s+1\)}{\Gamma\(s-\frac k2\)\Gamma\(s+\frac k2+2\)}Z_{m, n}(s, \chi)+R(s),
\end{equation*}
where $R(s)$ is holomorphic in $\sigma>\frac12$ and is $O\(\frac1{\sigma-\frac12}\)$ in this region. Arguing as in Section~2 of \cite{GS:kloosterman} we  find that for $m>0$ and for all $n$  we have  
\[\Res_{s=s_j}Z_{m, n}(s, \chi)=q^2i^k\overline{\rho_j(m)}\rho_j(n)\pi^{1-2s_j}(4\widetilde m |\widetilde n|)^{1-s_j}
\frac{\Gamma\(s_j+\sgn(\widetilde n)\frac k2\)\Gamma\(2s_j-1\)}{\Gamma\(s_j-\frac k2\)}.\]
The proposition follows by the method of Section 3 of \cite{GS:kloosterman}.

\section{Sums of Kloosterman sums for the $\eta$-multiplier}\label{kloostereta}

We specialize the results of Section \ref{kloostersection} to $k=\frac 12$,  $\Gamma=\SL_2(\Z)$, and  $\chi$ the multiplier system attached to the $\eta$-function.
For matrices in $\SL_2(\Z)$ with $c>0$ we have (see, for example, \S 2.8 of \cite{Iwaniec:classical})
\begin{equation}\label{chidef}
	\chi\left(\!\pMatrix abcd\!\right) = \sqrt{-i} \, e\left(\frac{a+d}{24c}\right)e^{-\pi i s(d,c)},
\end{equation}
where $s(d,c)$ is the Dedekind sum defined in \eqref{eq:def-dedekind-sum}.
In this case we have $q=1$ and $\alpha=\frac{23}{24}$. 
Recall that the pentagonal numbers are those numbers  of the form $\frac{k(3k\pm 1)}2$ for $k\in \Z$.
We have the following (c.f. \cite[Theorem 4.5]{Sarnak:additive}).
\begin{theorem} \label{thm:A-bound}
Suppose that $m>0$ and that $n\in \Z$.  If  $m-1=\frac{k_1(3k_1\pm 1)}2$ and $n-1=\frac{k_2(3k_2\pm 1)}2$ are both pentagonal then for any $\epsilon>0$ we have 
\[
	\sum_{c\leq x} \frac{S(m,n,c,\chi)}{c} = \sqrt{i}\, (-1)^{k_1+k_2} \, \mfrac{12\sqrt{3}}{\pi^2} \, x^{\frac12}+O\(x^{\frac16+\epsilon}\).
\]
Otherwise we have
\[
	\sum_{c\leq x} \frac{S(m,n,c,\chi)}{c} = O\(x^{\frac16+\epsilon}\).
\]
The implied constants depend on $m$,  $n$, and $\ep$.
\end{theorem}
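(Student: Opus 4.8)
The plan is to derive Theorem~\ref{thm:A-bound} as the specialization of Proposition~\ref{gsplus} to $k=\tfrac12$, $\Gamma=\SL_2(\Z)$, and the $\eta$-multiplier $\chi$ of \eqref{chidef}, where $q=1$ and $\alpha=\tfrac{23}{24}$, so that $\widetilde m=m-\tfrac{23}{24}$. To invoke the proposition I need three pieces of data: the exceptional spectrum $\{\lambda_j<\tfrac14\}$ of $\widetilde\Delta_{1/2}$ on $L^2(\SL_2(\Z)\backslash\H,\chi,\tfrac12)$, the Fourier coefficients $\rho_j(m)$ of the associated normalized Maass forms, and the Weil exponent $\beta$. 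With these in hand the theorem becomes a substitution into \eqref{taudef}.

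First I would pin down the exceptional spectrum. The function $\eta$ is a holomorphic cusp form of weight $\tfrac12$ for this multiplier, and $u_0:=\|\eta\|^{-1}y^{1/4}\eta(\tau)$ is a Maass form in the space above with eigenvalue $\lambda_0=\tfrac12\cdot\tfrac34=\tfrac3{16}$, so $s_0=\tfrac34$ and $2s_0-1=\tfrac12$; this produces the $x^{1/2}$ main term. The crucial claim—and what I expect to be the main obstacle—is that $\tfrac3{16}$ is the \emph{only} eigenvalue below $\tfrac14$. I would argue this from the one-dimensionality of the holomorphic weight-$\tfrac12$ space for the $\eta$-multiplier (spanned by $\eta$) together with the theta/Shimura correspondence: a genuine weight-$\tfrac12$ cuspidal eigenvalue with $\tfrac12<s_j<1$ would force a weight-$0$ exceptional eigenvalue in $(0,\tfrac14)$ on a congruence group, of which there are none in the relevant range. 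Hence the spectral sum in Proposition~\ref{gsplus} collapses to the single term $j=0$.

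Next I would compute $\rho_0(m)$. Since $s_0-\tfrac12=\tfrac14=\tfrac12-\tfrac k2$, the Whittaker function in \eqref{ujhat} reduces to the elementary $W_{1/4,1/4}(y)=y^{1/4}e^{-y/2}$, so matching $u_0=\|\eta\|^{-1}y^{1/4}\eta$ against its expansion and using the pentagonal number theorem $\eta(\tau)=\sum_{n\in\Z}(-1)^n q^{(6n+1)^2/24}$ gives
\[
	\rho_0(m)=\frac{(-1)^{k_1}}{\|\eta\|\,(4\pi\widetilde m)^{1/4}}\quad\text{if }m-1=\tfrac{k_1(3k_1\pm1)}2,\qquad \rho_0(m)=0\ \text{otherwise.}
\]
The normalization $\|\eta\|^2$ I would evaluate by Rankin--Selberg unfolding against the real-analytic Eisenstein series $E(\tau,s)$: the inner integral produces $\Gamma(s-\tfrac12)(6/\pi)^{s-1/2}\zeta(2s-1)(1-2^{1-2s})(1-3^{1-2s})$, and comparing residues at $s=1$ (using $\Res_{s=1}E=\tfrac3\pi$) yields $\|\eta\|^2=\tfrac{\pi}{3\sqrt6}$.

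Finally I would substitute into \eqref{taudef} with $q=1$, $k=\tfrac12$, $s_0=\tfrac34$, and $\sgn(\widetilde n)=1$ (the pentagonal conditions force $m,n\geq1$, hence $\widetilde m,\widetilde n>0$). The gamma ratio collapses to $\tfrac{\Gamma(1)\Gamma(1/2)}{\Gamma(1/2)}=1$, and after combining the powers of $4$, $\pi$, and $\widetilde m\widetilde n$, the main term $2\tau_0(m,n)x^{1/2}$ becomes exactly $\sqrt i\,(-1)^{k_1+k_2}\tfrac{12\sqrt3}{\pi^2}x^{1/2}$ once $\|\eta\|^2=\tfrac{\pi}{3\sqrt6}$ is inserted. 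If either $m-1$ or $n-1$ fails to be pentagonal then $\rho_0(m)\rho_0(n)=0$, so $\tau_0(m,n)=0$ and only the error survives. For the error, the Weil-type bound of Lehmer \eqref{eq:super_weil} applied to the $\eta$-multiplier Kloosterman sums gives $\beta\leq\tfrac12$, whence $O(x^{\beta/3+\epsilon})=O(x^{1/6+\epsilon})$, completing the proof.
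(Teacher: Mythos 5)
Your proposal follows essentially the same route as the paper: specialize Proposition~\ref{gsplus} to the $\eta$-multiplier, identify the bottom of the discrete spectrum with $u_0=y^{1/4}\eta(\tau)/\|y^{1/4}\eta\|$ at $s_0=\tfrac34$, evaluate $\|y^{1/4}\eta\|^2=\tfrac{\pi}{3\sqrt{6}}$ by Rankin--Selberg unfolding against $E(\tau,s)$, read off $\rho_0(m)$ from the pentagonal number theorem using $W_{1/4,1/4}(y)=y^{1/4}e^{-y/2}$, and substitute into \eqref{taudef}. All of the computations you describe check out (modulo the slip ``$\tfrac12\cdot\tfrac34$'' for $\lambda_0$; the correct expression is $\tfrac k2\bigl(1-\tfrac k2\bigr)=\tfrac14\cdot\tfrac34=\tfrac3{16}$), and your normalization and gamma-factor bookkeeping reproduce the constant $\sqrt{i}\,(-1)^{k_1+k_2}\tfrac{12\sqrt3}{\pi^2}$.

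There are, however, two genuine gaps, both in the justification of the key analytic inputs rather than in the formal computation. First, the spectral gap $\lambda_1>\tfrac14$. Your argument---that a weight-$\tfrac12$ exceptional eigenvalue would Shimura-lift to a weight-$0$ exceptional eigenvalue on a congruence group---presupposes a Shimura-type correspondence for Maass forms with the $\eta$-multiplier; this is not an off-the-shelf tool (it is precisely the content of the authors' forthcoming work \cite{AhlgrenAndersen}), and the one-dimensionality of the holomorphic weight-$\tfrac12$ space only controls the multiplicity of the bottom eigenvalue $\tfrac3{16}$, not the rest of the interval $\bigl(\tfrac3{16},\tfrac14\bigr)$. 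The paper instead invokes Bruggeman's theorem \cite[Theorem 2.15]{Bruggeman}, which rules out exceptional eigenvalues for this multiplier directly; some such citation (or a worked-out lift) is needed, since without it the spectral sum in Proposition~\ref{gsplus} could contain additional terms. Second, the exponent $\beta\leq\tfrac12$: Lehmer's bound \eqref{eq:super_weil} applies only to $A_c(n)$, that is, to $S(m,n,c,\chi)$ with first index $m=1$, whereas the theorem is stated for arbitrary $m>0$. For general $m$ one needs the Weil-type bound of \cite[Theorem 3]{Andersen:singular}, which is what the paper cites. With these two references supplied in place of your sketches, the proof goes through exactly as you describe.
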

Recalling the definition \eqref{Acn}, we find that
\[
	 \sqrt{i} \, A_c(n)=S\(1,-n+1,c,\chi\),
\]
so Theorem~\ref{Athm} is an immediate corollary.
Figure \ref{kloosterfig} shows values of the 
summatory function ${\bf A}_n(x)$ for the  pentagonal number $-n=1$  (along with the  asymptotic curve) and the non-pentagonal number $-n=-1$. 

\renewcommand{\thesubfigure}{\Alph{subfigure}}

\begin{figure}[h]
\centering
\begin{subfigure}[t]{0.48\textwidth}
	\includegraphics[width=\textwidth]{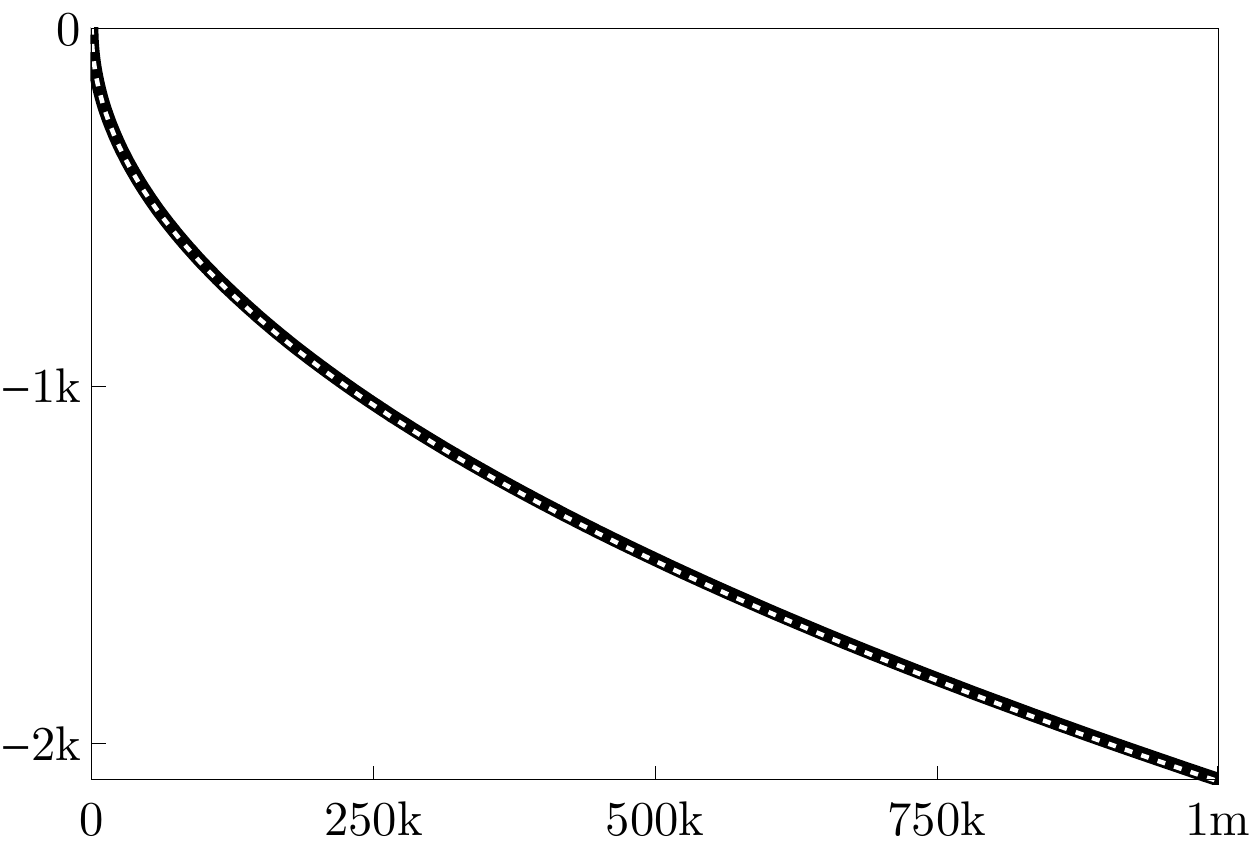}
    \caption{\parbox[t]{.65\textwidth}{${\bf A}_{-1}(x)$ (solid black line) vs. \\  $-\frac{12\sqrt{3}}{\pi^2}\sqrt{x}$ (dotted white line)}}
\end{subfigure}
\hfill
\begin{subfigure}[t]{0.48\textwidth}
	\includegraphics[width=\textwidth]{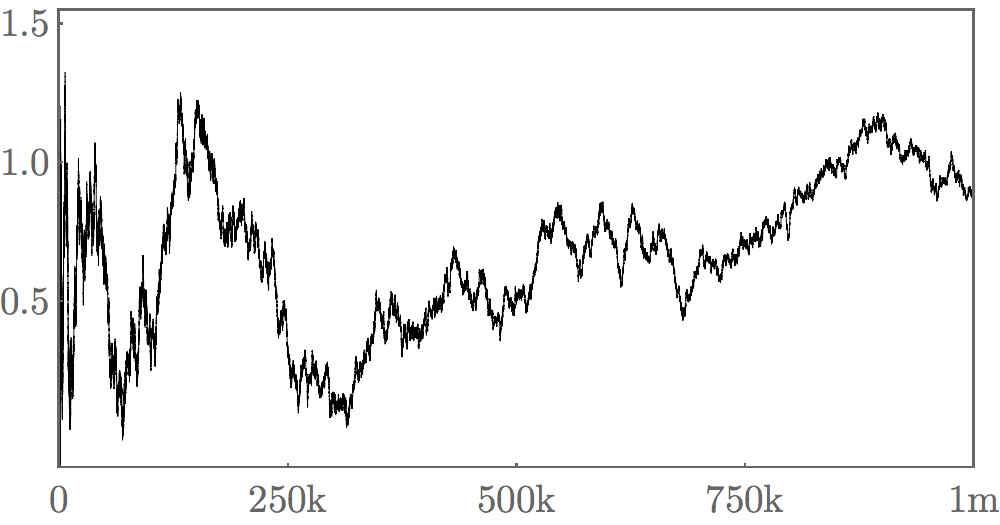}
    \caption{${\bf A}_{1}(x)$}
    \label{kfigB}
\end{subfigure}
\caption{Plots of ${\bf A}_n(x) = \sum\limits_{c\leq x}\frac{A_c(n)}{c}$ for $n=\pm 1$.}
\label{kloosterfig}	
\end{figure}

In the proof we will need to know the Petersson norm of the eta function,
which is given by the next lemma.
\begin{lemma}\label{lem:eta-norm} We have $\|y^{\frac 14}\eta\|^2 = \frac{\pi}{3\sqrt{6}}$.
\end{lemma}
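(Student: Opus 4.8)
The plan is to use the Rankin--Selberg unfolding method. First I would record that the function $\phi(\tau):=y^{1/2}|\eta(\tau)|^2$ is $\SL_2(\Z)$-invariant: the weight-$1/2$ automorphy of $\eta$ produces a factor $|c\tau+d|$ which cancels the transformation of $y^{1/2}$, and the multiplier $\chi$ from \eqref{chidef} is unimodular. Since $|\eta(\tau)|\sim e^{-\pi y/12}$ as $y\to\infty$, the function $\phi$ decays exponentially at the cusp, so $\|y^{1/4}\eta\|^2=\int_{\SL_2(\Z)\backslash\H}\phi\,\frac{dx\,dy}{y^2}$ converges. To evaluate it I would pair $\phi$ with the real-analytic Eisenstein series $E(\tau,s)=\sum_{\Gamma_\infty\backslash\SL_2(\Z)}\im(\gamma\tau)^s$ and unfold, which is valid for $\re s>1$:
\[
\int_{\SL_2(\Z)\backslash\H}\phi(\tau)E(\tau,s)\,\frac{dx\,dy}{y^2}=\int_0^\infty c_0(y)\,y^{s-2}\,dy,\qquad c_0(y):=\int_0^1\phi(x+iy)\,dx.
\]

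Next I would compute the constant term $c_0(y)$ from the Fourier expansion of $\eta$. By Euler's pentagonal number theorem $\eta(\tau)=\sum_{k\in\Z}(-1)^k e\!\left(\tfrac{(6k-1)^2}{24}\tau\right)$, so in $|\eta|^2$ only the diagonal terms survive the integration in $x$ (one checks that $(6k-1)^2=(6l-1)^2$ forces $k=l$ over $\Z$, since $6k-1=-(6l-1)$ has no integer solution), giving
\[
c_0(y)=y^{1/2}\sum_{k\in\Z}e^{-\pi(6k-1)^2y/6}.
\]
The integers $|6k-1|$, $k\in\Z$, run exactly once over the positive integers coprime to $6$, so a term-by-term Mellin transform yields, for $\re s>1$,
\[
\int_0^\infty c_0(y)\,y^{s-2}\,dy=\Gamma\!\left(s-\tfrac12\right)\left(\tfrac{\pi}{6}\right)^{\frac12-s}\zeta(2s-1)\left(1-2^{1-2s}\right)\left(1-3^{1-2s}\right),
\]
which continues meromorphically in $s$.

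Finally I would compare residues at $s=1$. The Eisenstein series has a simple pole there with constant residue $3/\pi=1/\operatorname{vol}(\SL_2(\Z)\backslash\H)$, so the left-hand side has residue $\tfrac{3}{\pi}\|y^{1/4}\eta\|^2$; the right-hand side inherits its pole from $\zeta(2s-1)$, whose residue at $s=1$ equals $\tfrac12$, and evaluating the remaining factors at $s=1$ gives
\[
\Res_{s=1}\int_0^\infty c_0(y)\,y^{s-2}\,dy=\sqrt{\pi}\cdot\sqrt{\tfrac{6}{\pi}}\cdot\tfrac12\cdot\tfrac12\cdot\tfrac23=\frac{1}{\sqrt6}.
\]
Equating the two residues yields $\|y^{1/4}\eta\|^2=\tfrac{\pi}{3}\cdot\tfrac1{\sqrt6}=\tfrac{\pi}{3\sqrt6}$, as claimed. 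I do not expect a serious obstacle here: the only points requiring care are the justification of the unfolding and the meromorphic continuation (both standard, since $\phi$ is rapidly decreasing) and the bookkeeping that produces the factor $1/\sqrt6$ --- in particular the vanishing of the off-diagonal contributions to the constant term and the identification of the arising Dirichlet series with $\zeta(2s-1)$ times its Euler factors at $2$ and $3$.
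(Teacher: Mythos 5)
Your proposal is correct and follows essentially the same route as the paper: unfold $\int E(\tau,s)\,y^{1/2}|\eta|^2\,\frac{dx\,dy}{y^2}$, compute the constant term from the theta/pentagonal expansion of $\eta$ (your $\sum_k(-1)^k e\bigl(\tfrac{(6k-1)^2}{24}\tau\bigr)$ is the same series as the paper's $\sum_n\leg{12}{n}q^{n^2/24}$), identify the resulting Dirichlet series as $\Gamma(s-\tfrac12)(\tfrac{6}{\pi})^{s-\frac12}(1-2^{1-2s})(1-3^{1-2s})\zeta(2s-1)$, and compare residues at $s=1$. Your bookkeeping of the factor $\operatorname{Res}_{s=1}E(\tau,s)=3/\pi$ is in fact slightly more careful than the paper's phrasing, and you land on the correct value $\frac{\pi}{3\sqrt6}$.
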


\begin{proof}
For $\re(s)>1$, let \[E(\tau,s) = \sum_{\gamma \in \Gamma_\infty \backslash \SL_2(\Z)} (\im \gamma \tau)^s\] denote the nonholomorphic Eisenstein series (see, for example, \cite{Zagier:eisenstein}), and define
\[
	I(s) := \int_{\SL_2(\Z)\backslash \H} E(\tau,s) y^\frac12 |\eta(\tau)|^2 \, \frac{dxdy}{y^2}.
\]
Since $E(\tau,s)$ has a pole at $s=1$ with residue $3/\pi$, we have 
\[
	\|y^{\frac 14}\eta\|^2 = \Res_{s=1} I(s).
\]
On the other hand, we have
\begin{align*}
	I(s) &= \int_{\Gamma_\infty \backslash \H} y^{s+\frac12} |\eta(\tau)|^2 \, \frac{dxdy}{y^2} \\
	&= \sum_{n,m\geq 1} \pfrac{12}{nm} \int_0^\infty y^{s-\frac12} e^{-2\pi (n^2+m^2)\frac y{24}}\, \frac{dy}{y} \times \int_0^1 e\left(\frac{n^2-m^2}{24}x\right) \, dx \\
	&= \ptfrac{6}{\pi}^{s-\frac12} \Gamma(s-\tfrac12) \sum_{\substack{n\geq 1 \\ (n,6)=1}} \frac{1}{n^{2s-1}} 
	= \ptfrac{6}{\pi}^{s-\frac12} \Gamma(s-\tfrac12) (1-\tfrac12)(1-\tfrac13)\zeta(2s-1).
\end{align*}
From this we obtain
\[
	\Res_{s=1} I(s) = \frac{1}{\sqrt{6}},
\]
as desired.
\end{proof}

\begin{proof}[Proof of Theorem \ref{thm:A-bound}]
By Proposition~1.2 of \cite{Sarnak:additive} and the discussion that follows, 
we see that the minimal eigenvalue of $\widetilde\Delta_\frac12$
is $\lambda_0=\frac3{16}$, so that $s_0=\frac34$. This is achieved by  a unique normalized Maass cusp form
in
$L^2\(\SL_2(\Z)\backslash \H, \chi, \tfrac12\)$,
namely
\begin{equation} \label{eq:u0-eta}
u_0(\tau)= \frac{y^\frac14\eta(\tau)}{\|y^\frac14\eta\|} = \sqrt{\mfrac 3\pi} \, (6y)^{\frac14} \eta(\tau).
\end{equation}

Bruggeman  studied families of  modular forms on $\SL_2(\Z)$ parametrized by their weight. His work \cite[Theorem 2.15]{Bruggeman} shows that there are no exceptional eigenvalues in this case; in other words we
have $\lambda_1>\frac14$.   In forthcoming work \cite{AhlgrenAndersen} of the present authors 
we introduce a theta lift which gives a Shimura-type correspondence between the space in question and  a space of weight
$0$ Maass cusp forms of level 6.   
This, together with computations of Booker and Str\"ombergsson as in \cite[Section 4]{booker-strombergsson} gives the  lower bound 
$\lambda_1>3.86$.

By Theorem 3 of \cite{Andersen:singular}
we can take $\beta=1/2$.
Proposition \ref{gsplus} and the bounds on $\lambda_1$ imply that
\[
	\sum_{c\leq x} \frac{S(m,n,c,\chi)}{c}  = 2\tau_0(m,n)x^{\frac12} + O(x^{\frac16+\epsilon}),
\]
where (with $\widetilde n=n-\frac{23}{24}$ as before) we have
\[
	\tau_0(m,n) = 2\sqrt{2i} \, \pi^{-\frac12} \, \bar{\rho_0(m)}\rho_0(n) \widetilde m^\frac14 |\widetilde n|^{\frac 14} \Gamma\left(\tfrac 34+\tfrac 14\sgn(\widetilde n)\right).
\]
Equations \eqref{uj}, \eqref{ujhat}, and \eqref{eq:u0-eta} give the relation
\begin{equation*}
	\sum_{m\in\Z} \rho_0(m) W_{\frac14 \sgn\( \widetilde m\), \frac14}\left(4\pi\left| \widetilde m\right|y\right) e\left( \widetilde m x\right) 
	=\sqrt{\mfrac 3\pi}\,(6y)^\frac14 q^{\frac1{24}}\(1+\sum_{k=1}^\infty (-1)^k\(q^{\frac{k(3k-1)}2}+q^{\frac{k(3k+1)}2}\)\).
\end{equation*}
Since
\[
	W_{\frac14,\frac14}(y) = y^\frac14 e^{-y/2},
\]
we find that
\[
	\rho_0(m) = 
	\begin{dcases}
		(-1)^k \sqrt{\mfrac 3\pi} \,6^{\frac14} (4\pi \widetilde m)^{-\frac14} &\text{ if } m-1=\tfrac{k(3k\pm 1)}{2}, \\
		0 & \text{ otherwise.}
	\end{dcases}
\]
Therefore
\[
	\tau_0(m,n) = 
	\begin{dcases}
		(-1)^{k_1+k_2}\frac{6\sqrt{3i}}{\pi^2} & \text{ if } m-1=\tfrac{k_1(3k_1\pm1)}{2} \text{ and } n-1=\tfrac{k_2(3k_2\pm1)}{2}, \\
		0 &\text{ otherwise,}
	\end{dcases}
\]
and the theorem follows.
\end{proof}

\bibliographystyle{plain-i}
\bibliography{spt-bib}

\end{document}